\documentclass[conference]{IEEEtran}
\ifCLASSINFOpdf
  % \usepackage[pdftex]{graphicx}
  % declare the path(s) where your graphic files are
  % \graphicspath{{../pdf/}{../jpeg/}}
  % and their extensions so you won't have to specify these with
  % every instance of \includegraphics
  % \DeclareGraphicsExtensions{.pdf,.jpeg,.png}
\else
  % or other class option (dvipsone, dvipdf, if not using dvips). graphicx
  % will default to the driver specified in the system graphics.cfg if no
  % driver is specified.
  \usepackage[dvips]{graphicx}
  % declare the path(s) where your graphic files are
  % \graphicspath{{../eps/}}
  % and their extensions so you won't have to specify these with
  % every instance of \includegraphics
  % \DeclareGraphicsExtensions{.eps}
\fi

% correct bad hyphenation here
\hyphenation{}
\newtheorem{theorem}{\bf Theorem}[section]
\newtheorem{lemma}{\bf Lemma}[section]
\newtheorem{remark}{\bf Remark}[section]
\newtheorem{corollary}{\bf Corollary}[section]
\newtheorem{assumption}{\bf Assumption}[section]

\usepackage{amsmath,amssymb}
\usepackage{algorithm}
\usepackage{algorithmic}
\usepackage{graphicx}
\usepackage{tikz}
\usetikzlibrary{shapes,arrows}

\begin{document}
\IEEEoverridecommandlockouts
\title{Robust Adaptive Dynamic Programming for Optimal Nonlinear Control Design}

\author{\IEEEauthorblockN{Yu Jiang and Zhong-Ping Jiang}
\IEEEauthorblockA{Department of Electrical and
Computer Engineering\\
Polytechnic Institute of New York University,
Brooklyn, NY 11201\\
Email: yjiang06@students.poly.edu, zjiang@poly.edu}\thanks{This work is supported in part by the U.S. National Science Foundation, under grants DMS-0906659 and ECCS-1101401.}}
\maketitle

\begin{abstract}
This paper studies the robust optimal control design for uncertain nonlinear systems from a perspective of robust adaptive dynamic programming (robust-ADP). The objective is to fill up a gap in the past literature of ADP where dynamic uncertainties or unmodeled dynamics are not addressed. A key strategy is to integrate tools from modern nonlinear control theory, such as the robust redesign and the backstepping techniques as well as the nonlinear small-gain theorem, with the theory of ADP. The proposed robust-ADP methodology can be viewed as a natural extension of ADP to uncertain nonlinear systems. A practical learning algorithm is developed in this paper, and has been applied to a sensorimotor control problem.
\end{abstract}

\IEEEpeerreviewmaketitle

\section{Introduction}
Reinforcement learning (RL) \cite{sutton1998reinforcement} is an important branch in machine learning theory. It is concerned with how an agent should modify its actions based on the reward from its reactive unknown environment so as to achieve a long term goal. In 1968, Werbos pointed out that the policy iteration technique devised in \cite{howard} for dynamic programming can be employed to perform RL \cite{werbos1968}. Starting from then, many real-time RL methods for finding online optimal control policies have emerged and they are broadly called approximate/adaptive dynamic programming (ADP) \cite{werbos1972beyond}, \cite{werbos1992approximate} or neurodynamic programming \cite{bertsekas}. See
\cite{Abu-Khalaf2008}, \cite{Al-Tamimi2007model},
\cite{jiang2011auto},
\cite{lewis2011reinforcement},
\cite{murray},
\cite{vamvoudakis2011},
\cite{vrabie2009nn},
\cite{vrabie2009adaptive},
for some recently developed results.

In the past literature of ADP, it is commonly assumed that the system order is known and the state variables are either fully available or reconstructible from the output; see  \cite{lewis2007}, \cite{lewis2011reinforcement} and reference therein. However, in practice, the system order may be unknown due to the presence of dynamic uncertainties (or unmodeled dynamics), which are motivated by engineering applications in situations where the exact mathematical model of a physical system is not easy to be obtained. Of course, dynamic uncertainties also make sense for the mathematical modeling in other branches of science such as biology and economics. This problem, often formulated as robust control, cannot be viewed as a special case of output feedback control, and the ADP methods developed in the past literature may not only fail to guarantee optimality, but also the stability of the closed-loop system when dynamic uncertainty occurs.
%In the seminal paper \cite{werbos2009intelligence}, Werbos also pointed out the related issue that the performance of learning  may deteriorate when using incomplete data in ADP.
To fill up the above-mentioned gap in the past literature of ADP, we recently developed a new theory of robust adaptive dynamic programming (robust-ADP) \cite{jiang2011tnnls}, \cite{jiang2011cdc}, \cite{jiang2012bookchapter}, which can be viewed as a natural extension of ADP to linear and partially linear systems with dynamic uncertainties.

The primary objective of this paper is to study robust-ADP designs for genuinely  nonlinear systems in the presence of dynamic uncertainties.
We first decompose the open-loop system into two parts: The {\it system model} (ideal environment) with known system order and  fully accessible state, and the {\it dynamic uncertainty}, with unknown system order and dynamics, interacting with the ideal environment.
In order  to handle the dynamic interaction between two systems, we then resort to the gain assignment idea \cite{jiang1997}, \cite{jiang1994small}, \cite{pralywang1996}. More specifically, we need
to assign a suitable gain
for the system model with disturbance in the sense of Sontag's
input-to-state stability (ISS)  \cite{sontagbook}.
The backstepping, robust redesign, and small-gain techniques
in modern nonlinear control theory are incorporated into the robust-ADP theory, such that
the system model is made ISS with an arbitrarily small gain.
At last, the nonlinear small-gain theorem \cite{jiang1994small} is applied to analyze the stability for the interconnected systems.

%The remainder of the paper is organized as follows.
%%
%Section 2 reviews the online policy iteration technique for affine nonlinear systems.
% %
%Section 3 studies the methodology of robust optimal design.
% %
% Section 4 estimates the domain of attraction for the closed-loop systems.
% %
%Section 5 develops robust-ADP-based online learning techniques for nonlinear control design. %, and practical algorithms are formulated.
% %
%%
%A synchronous power generator is provided in Section 6.
%%
%Finally, concluding remarks are given in Section 7.

Throughout this paper,
%we use $\mathbb{R}$ to denote the set of real numbers.
vertical bars $|\!\cdot\!|$ represent the Euclidean norm for vectors, or the induced matrix norm for matrices.
 For any piecewise continuous function $u$, $\|u\|$ denotes ${\rm sup}\{|u(t)|,t\ge 0\}$.
A function $\gamma:\mathbb{R}_{+}\rightarrow\mathbb{R}_{+}$ is said to be of class $\mathcal{K}$ if it is continuous, strictly increasing  with $\gamma(0)=0$. It is of class $\mathcal{K}_\infty$ if additionally $\gamma(s)\rightarrow\infty$ as $s\rightarrow\infty$.
A function $\beta:\mathbb{R}_{+}\times\mathbb{R}_+\rightarrow\mathbb{R}_+$ is of class $\mathcal{KL}$ if $\beta(\cdot,t)$ is of class $\mathcal{K}$ for every fixed $t\ge 0$, and $\beta(s,t)\rightarrow 0$ as $t\rightarrow\infty$  for each fixed $s \geq 0$.
%A control law is also called a {\it policy}, and it is said to be stabilizing if under the policy, the closed-loop system is asymptotically stable at the origin in the sense of Lyapunov \cite{Khalil2002}.
%
%For any piecewise continuous function $u$, $\|u\|$ denotes ${\rm sup}\{|u(t)|,t\ge 0\}$.
%We use $\otimes$ to indicate Kronecker product, and ${\rm vec}(A)$ is defined to be the $mn$-vector formed by stacking the columns of $A\in\mathbb{R}^{n\times m}$ on top of one another, i.e., ${\rm vec}(A) =[a_1^T~a_2^T~\cdots~a_m^T]^T$, where $a_i\in \mathbb{R}^n$ are the columns of $A$.
%A control law is also called a {\it policy}, and it is said to be stabilizing if under the policy, the closed-loop system is asymptotically stable at the origin in the sense of Lyapunov \cite{Khalil2002}.
The notation $\gamma_1>\gamma_2$ means $\gamma_1(s)>\gamma_2(s)$, $\forall s>0$.

%%%%%%%%%%%%%%%%%%%%%%%%%%%%%%%%%%
%%%%%%%%% Preliminaries %%%%%%%%%%
%%%%%%%%%%%%%%%%%%%%%%%%%%%%%%%%%%
\section{Preliminaries}

In this section, let us review a policy iteration technique to solve optimal control problems \cite{saridis1979}.

To begin with, consider the system
\begin{eqnarray}
\dot x=f(x)+g(x)u \label{eq:affine}
\end{eqnarray}
where $x\in\mathbb{R}^{n}$ is the system state, $u\in\mathbb{R}$ is the control input, $f,g:\mathbb{R}^{n}\rightarrow \mathbb{R}^{n}$ are locally Lipschitz functions.
%In addition, the system is assumed to be forward-complete.
For any initial condition $x_0\in \mathbb{R}^n$, the cost function associated with (\ref{eq:affine}) is defined as
\begin{eqnarray}
J(x_0)=\int_0^{\infty}\left[Q(x)+ru^2\right]dt,~~x(0)=x_0 \label{eq:cost0}
\end{eqnarray}
where $Q(\cdot)$ is a positive definite function,
%and $R=R^T>0$.
and $r>0$ is a constant.
In addition,  assume there exists an {\it admissible} control policy $u=u_0(x)$ in the sense that, under this policy, the system \eqref{eq:affine} is globally asymptotically stable and the cost \eqref{eq:cost0} is finite.
By \cite{lewis1995optimal}, the control policy that minimizes the cost (\ref{eq:cost0}) can be solved from the following Hamilton-Jacobi-Bellman (HJB) equation:
\begin{eqnarray}
\label{eq:HJB0}
0=\nabla V(x) f(x)+Q(x)-\frac{1}{4r}\left[\nabla V(x)g(x)\right]^2
\end{eqnarray}
with the boundary condition $V(0)=0$. Indeed, if the solution $V^*(x)$ of  \eqref{eq:HJB0} exists, the optimal control policy is given by
\begin{eqnarray}
\label{eq:HJBu}
u^*(x)=-\frac{1}{2r}g(x)^T\nabla V^{*}(x)^T.
\end{eqnarray}

In general, the analytical solution of \eqref{eq:HJB0} is difficult to be solved. However, if $V^*(x)$ exists, it can be approximated using the policy iteration technique \cite{saridis1979}:

\begin{enumerate}
\item Find an admissible control policy $u_0(x)$.
\item For any integer $i\ge 0$, solve for $V_i(x)$, with $V_i(0)=0$, using
\begin{eqnarray}
%\hspace{-7mm}
0=\nabla V_i(x)\left[f(x)\!+\!g(x)u_i(x)\right]\!+\!Q(x)\!+\!ru_i(x)^2. \label{eq:pi1}
\end{eqnarray}

\item Update the control policy using
\begin{eqnarray}
u_{i+1}(x)=-\frac{1}{2r}g(x)^T\nabla V_i(x)^T. \label{eq:pi2}
\end{eqnarray}
\end{enumerate}

Convergence of the policy iteration (\ref{eq:pi1}) and (\ref{eq:pi2}) is concluded in the following theorem, which can be seen as a trivial extension of Theorem 4 in \cite{saridis1979}.

\begin{theorem}
Consider $V_i(x)$ and $u_i(x)$ defined in (\ref{eq:pi1}) and (\ref{eq:pi2}). Then, for all $i=0,1,\cdots$,
\begin{eqnarray}
V_{i+1}(x)\le V_i(x),~~\forall x\in R^n
\end{eqnarray}
and $u_i(x)$ is admissible.
In addition, if the solution $V^*(x)$ of \eqref{eq:HJB0} exists, then for each fixed $x$, $V_i(x)$ and $u_i(x)$ converge pointwise  to $V^*(x)$ and $u^*(x)$, respectively.
%$\forall \epsilon_0>0$, there exists $i_0>0$, such that for all $i>i_0$ we have
%\begin{eqnarray}
%\sup_{x\in\Omega_0}|V_i(x)-V^* (x)|&<&\epsilon_0,\\
%\sup_{x\in\Omega_0}|u_i(x)-u^* (x)|&<&\epsilon_0.
%\end{eqnarray}
\end{theorem}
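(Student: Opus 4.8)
The plan is to establish this result in three stages: first showing that each $V_i$ is a well-defined, positive definite Lyapunov function certifying admissibility of $u_i$; then proving the monotone decrease $V_{i+1}\le V_i$; and finally deducing pointwise convergence to $V^*$ by a monotone-convergence argument combined with the HJB equation. Throughout I would proceed by induction on $i$, using the fact that $u_0$ is admissible as the base case.

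Let me sketch the core algebraic identity on which everything hinges.
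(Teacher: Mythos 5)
Your three-stage outline (admissibility via a Lyapunov argument, monotone decrease, then convergence to the HJB solution, all by induction from the admissibility of $u_0$) is exactly the skeleton of the standard argument, i.e., of Theorem 4 of Saridis and Lee \cite{saridis1979} --- which is in fact all the paper itself offers, since it proves nothing and simply cites that result as a "trivial extension." The problem is that your proposal stops at the sentence announcing ``the core algebraic identity on which everything hinges'' and never states it: as written there is no proof, only a table of contents. The missing identity is a completion of squares. Along solutions of $\dot x=f(x)+g(x)u_{i+1}(x)$, using \eqref{eq:pi1} and the fact that $\nabla V_i(x)g(x)=-2ru_{i+1}(x)$ from \eqref{eq:pi2},
\begin{eqnarray*}
\dot V_i &=& \nabla V_i\left(f+gu_i\right)+\nabla V_i\, g\left(u_{i+1}-u_i\right)\\
&=& -Q-ru_i^2-2ru_{i+1}\left(u_{i+1}-u_i\right)\\
&=& -Q-ru_{i+1}^2-r\left(u_{i+1}-u_i\right)^2 \;\le\; -Q(x).
\end{eqnarray*}
This single computation is what delivers your first two stages: it exhibits $V_i$ as a Lyapunov function for the closed loop under $u_{i+1}$, so $u_{i+1}$ is stabilizing with finite cost (admissibility, inductively from $u_0$); and subtracting the relation $\dot V_{i+1}=-Q-ru_{i+1}^2$, which holds along the same trajectory by \eqref{eq:pi1} at index $i+1$, and integrating over $[0,\infty)$ (both functions vanish at infinity by asymptotic stability) gives $V_i(x_0)-V_{i+1}(x_0)=\int_0^\infty r\left(u_i-u_{i+1}\right)^2 dt\ge 0$, which is the monotonicity claim.

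Beyond the truncation, two points your outline treats as automatic need real work. First, existence of $V_i$: the linear PDE \eqref{eq:pi1} is not known a priori to be solvable; the standard construction defines $V_i(x_0)=\int_0^\infty\left[Q(x)+ru_i^2(x)\right]dt$ along the $u_i$-trajectory (finite precisely because $u_i$ is admissible) and then verifies that this function satisfies \eqref{eq:pi1}, which also supplies the positive definiteness you invoke. Second, your final stage is not a routine monotone-convergence argument: monotonicity together with the lower bound $V^*\le V_i$ (every $V_i$ is the cost of an admissible policy) gives a pointwise limit $V_\infty\ge V^*$, but to conclude $V_\infty=V^*$ you must pass to the limit in \eqref{eq:pi1}--\eqref{eq:pi2}, i.e., in the gradients $\nabla V_i$, and pointwise convergence of $V_i$ does not by itself give any convergence of $\nabla V_i$. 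That interchange is the technical heart of the Saridis--Lee theorem the paper leans on; a proof that gestures at ``monotone convergence combined with the HJB equation'' has not closed it.
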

%It is shown in \cite{Abu-Khalaf2005nearly} and \cite{beard1997galerkin} that the policy iteration (\ref{eq:pi1}) and (\ref{eq:pi2}) converges uniformly to the optimal control solution $(V ,u )$ in $\Omega$, i.e.
%$\forall \epsilon>0$, there exists $i_0>0$, such that for all $i>i_0$ we have
%\begin{eqnarray}
%\sup_{x\in\Omega}|V_i(x)-V (x)|&<&\epsilon,\\
%\sup_{x\in\Omega}|u_i(x)-u (x)|&<&\epsilon.
%\end{eqnarray}

\section{Online Learning via Robust-ADP}

In this section, we develop the robust-ADP methodology for nonlinear systems as follows:
%Consider the system
\begin{eqnarray}
\dot{w}&=&\Delta_w(w,x) \label{eq:mc1}\\
\dot{x}&=&f(x)+g(x)\left[u+\Delta(w,x)\right] \label{eq:mc2}
\end{eqnarray}
where $x\in\mathbb{R}^{n}$ is the measured component of the state available for feedback control,  $w\in\mathbb{R}^{p}$ is the unmeasurable part of the state with unknown order $p$, $u\in\mathbb{R}$ is the control input, $\Delta_w:\mathbb{R}^{p}\times\mathbb{R}^{n}\rightarrow\mathbb{R}^{p}$,  $\Delta:\mathbb{R}^{p}\times \mathbb{R}^n\rightarrow \mathbb{R}$ are unknown locally Lipschitz functions, $f$ and $g$ are defined the same as in \eqref{eq:affine} but are assumed to be unknown.

%\subsection{Robust-ADP with matched disturbance input}

Our design objective is to find online the control policy which stabilizes the system at the origin. Also, in the absence of the dynamic uncertainty (i.e., $\Delta=0$ and the $w$-subsystem is absent), the control policy becomes the optimal control policy that minimizes \eqref{eq:cost0}.

\subsection{Online policy iteration}

The iterative technique introduced in Section 2 relies on the knowledge of $f(x)$ and $g(x)$. To remove this requirement, we develop a novel online policy iteration technique, which can be viewed as the nonlinear extension of \cite{jiang2011auto}.

To begin with, notice that (\ref{eq:mc2}) can be rewritten as
\begin{eqnarray}
\dot x&=&f(x)+g(x)u_i(x)+g(x)v_i
\label{eq:new_form}
\end{eqnarray}
where $v_i=u+\Delta-u_i$. For each $i\ge 0$, the time derivative of $V_i(x)$ along the solutions of (\ref{eq:new_form}) satisfies
%Therefore, for each $V_i(x)$, along the solutions of (\ref{eq:new_form}) we have
\begin{eqnarray}
\dot{V}_i(x)
%&=&\nabla V_i(x)\left[f(x)+g(x)u_i(x)+g(x)v_i\right]\nonumber\\
%&=&-Q(x)-{r}|u_i(x)|^2+\nabla V_i(x)g(x)v_i\nonumber\\
&=&-Q(x)-{r}u_i^2(x)-2ru_{i+1}(x)v_i.\label{eq:derivative}
\end{eqnarray}

Integrating both sides of \eqref{eq:derivative} on any time interval $[t,t+T]$, it follows that
 %with the approximation defined in \eqref{eq:approx1} and \eqref{eq:approx2} yields
\begin{eqnarray}
&&V_i(x(t+T))-V_i(x(t))\nonumber\\
&=&\int_{t}^{t+T}\left[-Q(x)-ru_i^2(x)-2r u_{i+1}(x)v_i\right]\!dt.\label{eq:onlineformular0}
\end{eqnarray}
%\begin{eqnarray}
%&&\mathbf{w}_{1,i}^T\left[\Phi_1(x(t_{j+1}))-\Phi_1(x(t_j))\right]\nonumber\\
%&=&\int_{t_j}^{t_{j+1}}\left[-Q(x)-r|\mathbf{w}_{2,i}^T\Phi_2(x)|^2\right.\label{eq:onlineformular}\\
%&&\left.-2r \mathbf{w}_{2,i+1}^T\Phi_2(x)(\hat{u}
%-\mathbf{w}_{2,i}^T\Phi_2(x))\right]\!dt+e_{3,i}\nonumber
%\end{eqnarray}

Notice that if $u_i(x)$ is given, the unknown functions $V_i(x)$ and $u_{i+1}(x)$ can be approximated using \eqref{eq:onlineformular0}. To be more specific, for any given compact set $\Omega\subset\mathbb{R}^n$ containing the origin as an interior point, let $\{\phi_j(x)\}_{j=1}^{\infty}$ be an infinite sequence of linearly independent smooth basis functions on $\Omega$, where $\phi_j(0)=0$ for all $j=1,2,\cdots$.
%where $e_{3,i}$ denotes the approximation error, and $\{t_j\}_{j=0}^l$ is a strictly increasing sequence with $l>0$ a sufficiently large integer.
%To this end, we assume that $u_0$ is an admissible control policy, and that $\{u_i\}_{i=0}^{\infty}$, $\{V_i\}_{i=0}^{\infty}$ are two sequences obtained using the policy iteration \eqref{eq:pi1} and \eqref{eq:pi2}.
%
%
%
%
%
%Define the following two vectors of radial-basis functions (RBFs):
%\begin{eqnarray*}
%\Phi_1(x)\!\!&\!\!=\!\!\!&\!\!\left[\begin{array}{cccc}\phi_{1,1}(x)&\phi_{1,2}(x)&\cdots&\phi_{1,N_1}(x)\end{array}\right]^T\in\mathbb{R}^{N_1},\\
%\Phi_2(x)\!\!&\!\!=\!\!\!&\!\!\left[\begin{array}{cccc}\phi_{2,1}(x)&\phi_{2,2}(x)&\cdots&\phi_{2,N_2}(x)\end{array}\right]^T\in\mathbb{R}^{N_2}
%\end{eqnarray*}
%where $\phi_{k,j_k}=\exp(-\sigma_k^{-1}|x-c_{k,j_k}|^2)$ with $\sigma_k>0$ and $c_{k,j_k}\in\mathbb{R}^n$, for $k=1,2$ and $j_k=1,2,\cdots,N_k$.
%%where $\{\phi_{1,j_1}\}$ and $\{\phi_{2,j_2}\}$ are two sets of linearly independent functions on $\Omega$, with $\phi_{1,j_1}(0)=\phi_{1,j_2}(0)=0$, for all $j_1=1,2,\cdots,N_1$, and $j_2=1,2,\cdots,N_2$.
%%%
Then, for each $i=0,1,\cdots$, the cost function and the control policy are approximated by
%\begin{eqnarray}
%\hat{V}_i(x)&=&\sum\limits_{j=1}^{N_1}\hat{c}_{i,j}\phi_j(x),\label{eq:approx1}\\
%\hat{u}_{i+1}(x)&=&\sum\limits_{j=1}^{N_2}\hat{w}_{i,j}\phi_j(x). \label{eq:approx2}
%\end{eqnarray}
$\hat{V}_i(x)=\sum\limits_{j=1}^{N_1}\hat{c}_{i,j}\phi_j(x)$, and
$\hat{u}_{i+1}(x)=\sum\limits_{j=1}^{N_2}\hat{w}_{i,j}\phi_j(x)$, respectively,
where $N_1>0$, $N_2>0$ are two sufficiently large integers, and $\hat{c}_{i,j}$, $\hat{w}_{i,j}$ are constant weights to be determined.

Replacing $V_i(x)$, $u_i(x)$, and $u_{i+1}(x)$ in \eqref{eq:onlineformular0} with their approximations, we obtain
\begin{eqnarray}
&&\sum\limits_{j=1}^{N_1}\hat{c}_{i,j}\left[\phi_j(x(t_{k+1}))-\phi_j(x(t_{k}))\right]\nonumber\\
&=&-\int_{t_k}^{t_{k+1}}2r\sum\limits_{j=1}^{N_2}\hat{w}_{i,j}\phi_j(x)\hat{v}_idt\label{eq:onlineformular}\\
&&-\int_{t_k}^{t_{k+1}}\left[Q(x)+r\hat{u}_i^2(x)\right]dt+e_{i,k}\nonumber
\end{eqnarray}
where $\hat{u}_0=u_0$, $\hat{v}_i=u+\Delta-\hat{u}_i$, and $\{t_k\}_{k=0}^l$ is a strictly increasing sequence with $l>0$ a sufficiently large integer. Then, the weights $\hat{c}_{i,j}$ and $\hat{w}_{i,j}$ can be solved in the sense of least-squares (i.e., by minimizing $\sum_{k=0}^le_{i,k}^2$).

Now, starting from $u_0(x)$, two sequences $\{\hat{V}_i(x)\}_{i=0}^{\infty}$, and $\{\hat{u}_{i+1}(x)\}^{\infty}_{i=0}$
can be generated via the online policy iteration technique \eqref{eq:onlineformular}. Next, we show the convergence of the sequences to $V_i(x)$ and $u_{i+1}(x)$, respectively.

%the ideal nonlinear functions.

%
%\noindent{\bf Online policy iteration:}
%\begin{enumerate}
%\item Let $\hat{u}_0(x)$ be an approximation of $u_0(x)$.
%\item For $i\ge0$, solve $\hat{c}_{i,j}$ and $\hat{w}_{i,j}$ from \eqref{eq:onlineformular}.
%
%\end{enumerate}

%To guarantee the existence of a unique least-squares solution, we introduce the following persistent excitation condition.

\begin{assumption}
\label{as:PE}
There exist $l_0>0$ and $\delta>0$, such that for all $l\ge l_0$, we have
\begin{eqnarray}
\frac{1}{l}\sum\limits_{k=0}^{l}\theta_{i,k}^T\theta_{i,k}\ge \delta I_{N_1+N_2}
\end{eqnarray}
where
\begin{eqnarray*}
\theta_{i,k}^{T}=\left[\begin{array}{c}
\phi_{1}(x({t_{k+1}}) )-\phi_{1}(x(t_k) )\\
\phi_{2}(x({t_{k+1}}) )-\phi_{2}(x(t_k) )\\
 \vdots \\
\phi_{N_{1}}(x({t_{k+1}}) )-\phi_{N_{1}}(x(t_k) )\\
 2r\int_{t_{k}}^{t_{k+1}}\phi_{1}(x )\hat{v}_i(x )dt\\
  2r\int_{t_{k}}^{t_{k+1}}\phi_{2}(x )\hat{v}_i(x )dt\\
   \vdots\\
   2r \int_{t_{k}}^{t_{k+1}}\phi_{N_{2}}(x )\hat{v}_i(x )dt\end{array}\right]\in\mathbb{R}^{N_1+N_2}.
\end{eqnarray*}
\end{assumption}

\begin{assumption}
\label{as:inv}
For all $t\ge0$, we have $x(t)\in\Omega$.
\end{assumption}

Notice that, Assumption \ref{as:inv} is not very restrictive and can be satisfied if  $\Omega$ is an invariant set for the $x$-subsystem. This issue will be further elaborated in Section \ref{sec:imp}.

%Convergence of the online policy iteration technique is summarized in the following theorem:
\begin{theorem}
Under Assumptions \ref{as:PE} and \ref{as:inv}, for each $i\ge 0$, we have
\begin{eqnarray}
\lim_{N_{1},N_{2}\rightarrow\infty}\hat{V}_i(x)~~&=&{V}_{i}(x),\\
\lim_{N_{1},N_{2}\rightarrow\infty}\hat{u}_{i+1}(x)&=&{u}_{i+1}(x),
\end{eqnarray}
for all $x\in\Omega.$
\begin{proof}
See the Appendix.
\end{proof}
\end{theorem}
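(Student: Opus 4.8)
The plan is to argue by induction on the iteration index $i$, reducing the claim at each step to a convergence statement for the least-squares fit determined by (\ref{eq:onlineformular}). The base case $i=0$ is special only in that $\hat u_0=u_0$ holds identically, so $\hat v_0=v_0$ and the data entering the regression coincide exactly with those built from the true policy; for $i\ge1$ the inductive hypothesis $\hat u_i\to u_i$ on $\Omega$ (as $N_1,N_2\to\infty$) supplies the same property in the limit. First I would stack the unknowns into $\Theta_i=[\hat c_{i,1},\dots,\hat c_{i,N_1},\hat w_{i,1},\dots,\hat w_{i,N_2}]^{T}$ and rewrite (\ref{eq:onlineformular}) as the scalar regression $\theta_{i,k}\Theta_i=\xi_{i,k}+e_{i,k}$, with $\theta_{i,k}$ the row regressor of Assumption \ref{as:PE} and $\xi_{i,k}=-\int_{t_k}^{t_{k+1}}[Q(x)+r\hat u_i^2(x)]\,dt$ the measured forcing term. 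Minimizing $\sum_k e_{i,k}^2$ gives the normal equations $M_i\hat\Theta_i=\sum_k\theta_{i,k}^{T}\xi_{i,k}$ with $M_i=\sum_k\theta_{i,k}^{T}\theta_{i,k}$, and Assumption \ref{as:PE} yields $M_i\ge l\delta I_{N_1+N_2}$, so $\hat\Theta_i$ exists and is unique.

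The core comparison is with the truncations of the \emph{true} functions. By completeness of $\{\phi_j\}$ on the compact set $\Omega$ and continuity of $V_i,u_{i+1}$ (which also vanish at the origin, since $\nabla V_i(0)=0$, matching $\phi_j(0)=0$), there are coefficient vectors collected in $\Theta_i^{*}$ giving $V_i^{N_1}=\sum_{j\le N_1}c_{i,j}^{*}\phi_j\to V_i$ and $u_{i+1}^{N_2}=\sum_{j\le N_2}w_{i,j}^{*}\phi_j\to u_{i+1}$ uniformly on $\Omega$. Since the true pair satisfies the exact identity (\ref{eq:onlineformular0}) with zero residual, inserting $\Theta_i^{*}$ into the regression leaves a residual $\epsilon_{i,k}=\theta_{i,k}\Theta_i^{*}-\xi_{i,k}$ that I would show obeys $\max_k|\epsilon_{i,k}|\to0$ as $N_1,N_2\to\infty$, using Assumption \ref{as:inv} (all samples lie in $\Omega$), the uniform approximation errors, and the inductive convergence $\hat v_i\to v_i$ with dominated convergence on each $[t_k,t_{k+1}]$. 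The optimality of the least-squares solution forces the residual of $\hat\Theta_i$ to be no larger than that of $\Theta_i^{*}$, and a standard expansion gives $\sum_k\bigl(\theta_{i,k}(\hat\Theta_i-\Theta_i^{*})\bigr)^2\le 4\sum_k\epsilon_{i,k}^2$; the bound $M_i\ge l\delta I$ from Assumption \ref{as:PE} then converts this into $\|\hat\Theta_i-\Theta_i^{*}\|\to0$, from which one reconstructs $\hat V_i\to V_i$ and $\hat u_{i+1}\to u_{i+1}$ on $\Omega$, closing the induction.

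The step I expect to demand the most care is the passage from the finite-dimensional estimate to the infinite-dimensional target: both the dimension $N_1+N_2$ and the ideal vector $\Theta_i^{*}$ move as the basis is refined, and since the regressor and reconstruction vectors $[\phi_1(x),\dots,\phi_{N_1}(x)]$ grow with $N_1$, a naive coefficient-wise estimate does not by itself yield uniform function convergence, so one must argue within the subspace the data actually excite and lean on the completeness of $\{\phi_j\}$ and the uniqueness of $(V_i,u_{i+1})$ as the solution of the policy-evaluation step (\ref{eq:pi1})--(\ref{eq:pi2}) to identify the limit. This is precisely where Assumption \ref{as:PE} is indispensable, and where it matters that $\delta$ is independent of $N_1,N_2$: it keeps $M_i$ uniformly bounded below relative to the sample count so that $M_i^{-1}$ does not amplify the small residual as the problem grows. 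A secondary point, needed to run the induction, is error propagation through $\hat v_i=u+\Delta-\hat u_i$: the uniform convergence $\hat u_i\to u_i$ on the compact $\Omega$, with local boundedness of the data there, must be shown to force the integrals $\int\phi_j\hat v_i\,dt$ in $\theta_{i,k}$ and the term $\int r\hat u_i^2\,dt$ in $\xi_{i,k}$ to converge to their true counterparts over the finite horizons $[t_k,t_{k+1}]$.
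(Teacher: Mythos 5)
Your proposal is correct in substance and runs on the same engine as the paper's proof --- least-squares optimality plus the excitation bound of Assumption \ref{as:PE} to control the coefficient error, wrapped in an induction on $i$ with base case $\hat u_0=u_0$ --- but it uses a genuinely different decomposition. The paper's Appendix introduces an intermediate pair $(\tilde V_i,\tilde u_{i+1})$: the \emph{exact} policy-evaluation solution for the \emph{approximate} policy $\hat u_i$, i.e.\ $\nabla\tilde V_i(f+g\hat u_i)+Q+r\hat u_i^2=0$, $\tilde u_{i+1}=-\frac{1}{2r}g^T\nabla\tilde V_i^T$. Because this pair satisfies the regression identity built from $\hat v_i$ \emph{exactly}, the residual in the paper's Lemma A.1 is purely a basis-truncation tail, and the lemma ($\hat V_i\to\tilde V_i$, $\hat u_{i+1}\to\tilde u_{i+1}$) stands alone; the induction step is then a separate function-space argument showing $\tilde V_i\to V_i$ via integral representations of $V_i-\tilde V_i$ driven by $\hat u_i-u_i$. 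You instead compare $\hat\Theta_i$ directly with the truncated expansion of the true pair $(V_i,u_{i+1})$. Since $(V_i,u_{i+1})$ satisfies \eqref{eq:onlineformular0} with $v_i$ rather than $\hat v_i$, your regression residual $\epsilon_{i,k}$ is \emph{not} just a truncation tail: it also contains the mismatch terms $\int_{t_k}^{t_{k+1}}\bigl[r(\hat u_i^2-u_i^2)-2ru_{i+1}(\hat u_i-u_i)\bigr]dt$, and your proof is sound precisely because you route the induction hypothesis $\hat u_i\to u_i$ through this residual bound --- so be careful with the phrase ``zero residual,'' which holds for the identity in $v_i$ but not for the regression actually being solved. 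The trade-off: the paper's route yields a cleaner two-stage argument, but its comparison target $\tilde V_i$ depends on $N_1,N_2$ (through $\hat u_i$), so its expansion coefficients and tails move as the basis is refined --- a subtlety the paper glosses over; your target is fixed and independent of $N_1,N_2$, which avoids that issue at the cost of a residual mixing two error sources. Finally, both your proof and the paper's share the same unresolved delicacy, which you honestly flag: passing from $\|\hat\Theta_i-\Theta_i^*\|\to 0$ to \emph{uniform} function convergence on $\Omega$ is not automatic when $N_1,N_2$ grow (Cauchy--Schwarz brings in the factor $\bigl(\sum_{j\le N_1}\phi_j(x)^2\bigr)^{1/2}$, which may grow with $N_1$); the paper's ``$\frac{\epsilon}{2}+\frac{\epsilon}{2}$'' step makes exactly the same implicit assumption on the basis, so this is not a defect of your approach relative to the paper's.
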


\begin{corollary}
\label{cor:con}
Under Assumptions \ref{as:PE} and \ref{as:inv}, for any arbitrary $\epsilon>0$, there exist integers $i^*>0$, $N_1^*>0$ and $N^*_2>0$, such that
\begin{eqnarray*}
|\hat{V}_i(x)-V^*(x)|\le\epsilon,~~{\rm and}~~
|\hat{u}_{i+1}(x)-u^*(x)|\le\epsilon,
\end{eqnarray*}
for all $x\in\Omega$, if $i>i^*$, $N_1>N_1^*$, and $N_2>N_2^*$.
\end{corollary}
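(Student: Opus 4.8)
The plan is to prove the corollary by combining the two convergence results already established, through a triangle inequality that separates the total error into a \emph{policy-iteration error} (finiteness of $i$) and an \emph{approximation error} (finiteness of $N_1,N_2$). Writing
\[
|\hat V_i(x)-V^*(x)|\le |\hat V_i(x)-V_i(x)|+|V_i(x)-V^*(x)|,
\]
the second term is controlled by the policy-iteration convergence theorem (Theorem 2.1) and the first by the basis-function convergence theorem (Theorem 3.1); the analogous split is used for $\hat u_{i+1}$ against $u^*$. The whole task then reduces to making each piece smaller than $\epsilon/2$ \emph{uniformly} over the compact set $\Omega$.

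First I would handle the policy-iteration tail $|V_i(x)-V^*(x)|$. Theorem 2.1 supplies the monotonicity $V_{i+1}(x)\le V_i(x)$ together with pointwise convergence $V_i(x)\to V^*(x)$. Since the $V_i$ and $V^*$ are continuous and $\Omega$ is compact, monotone pointwise convergence upgrades to uniform convergence by Dini's theorem, yielding an $i^*$ with $\sup_{x\in\Omega}|V_i(x)-V^*(x)|\le \epsilon/2$ for all $i>i^*$. For the control sequence I would obtain the analogous uniform bound either from the update rule $u_{i+1}(x)=-\tfrac{1}{2r}g(x)^T\nabla V_i(x)^T$ together with uniform convergence of the gradients, or---more robustly---by invoking the Arzel\`a--Ascoli theorem on the equicontinuous, uniformly bounded family $\{u_{i+1}\}_{i\ge0}$ to turn pointwise into uniform convergence on $\Omega$.

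Next comes the approximation error. For each \emph{fixed} $i$, Theorem 3.1 already guarantees $\hat V_i\to V_i$ and $\hat u_{i+1}\to u_{i+1}$ on $\Omega$ as $N_1,N_2\to\infty$. The hard part will be choosing $N_1^*,N_2^*$ that work simultaneously for \emph{all} $i>i^*$, since the corollary fixes a single pair of thresholds while $i$ ranges over an infinite set. I expect this interchange of the two limiting processes to be the main obstacle, and I would resolve it by exploiting the uniform convergence established in the previous step: the tail family $\{V_i\}_{i>i^*}\cup\{V^*\}$ is relatively compact in $C(\Omega)$, being a uniformly convergent sequence of continuous functions (hence uniformly bounded and equicontinuous), so a single approximation accuracy can be attained across the whole family. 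This gives $N_1^*,N_2^*$ independent of $i$ with $\sup_{x\in\Omega}|\hat V_i(x)-V_i(x)|\le\epsilon/2$ for $i>i^*$, $N_1>N_1^*$, $N_2>N_2^*$, and likewise for the control.

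Combining the two uniform estimates through the triangle inequality then yields $|\hat V_i(x)-V^*(x)|\le\epsilon$ and $|\hat u_{i+1}(x)-u^*(x)|\le\epsilon$ for all $x\in\Omega$ whenever $i>i^*$, $N_1>N_1^*$, and $N_2>N_2^*$, which is exactly the claim. The only genuinely delicate point is the uniformity in $i$ of the basis-function approximation, handled above by the compactness argument; everything else is bookkeeping with the triangle inequality.
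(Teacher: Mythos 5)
Your overall strategy---splitting $|\hat V_i - V^*|$ by the triangle inequality into a policy-iteration tail (handled by Theorem 2.1) plus an online-approximation error (handled by Theorem 3.1)---is exactly how the corollary is meant to follow; the paper states it without a separate proof for precisely this reason. Your treatment of the first piece is sound: monotonicity $V^*\le V_{i+1}\le V_i$ plus Dini's theorem upgrades pointwise to uniform convergence on the compact set $\Omega$, and monotonicity even makes the bound automatic for every $i>i^*$ once it holds at $i^*+1$. (For the controls, however, Theorem 2.1 only gives pointwise convergence; your Arzel\`a--Ascoli route needs uniform boundedness and equicontinuity of $\{u_{i+1}\}$, and your alternative needs convergence of $\nabla V_i$, neither of which follows from uniform convergence of $V_i$ alone---this is a secondary loose end.)

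The genuine gap is in the step you yourself flagged as delicate. Relative compactness of $\{V_i\}_{i>i^*}\cup\{V^*\}$ in $C(\Omega)$ controls the \emph{best-approximation} error from ${\rm span}\{\phi_1,\dots,\phi_{N_1}\}$ uniformly over that family, but $\hat V_i$ is not a near-best approximation of $V_i$: it is produced by the recursive online least-squares scheme \eqref{eq:onlineformular}, in which the regression data at step $i$ are generated using the \emph{approximate} policy $\hat u_i$ from step $i-1$. Accordingly, the error bound underlying Theorem 3.1 (see Lemma A.1, where $|\bar W_i|^2\le \frac{4}{\delta}\max_k \xi_{i,k}^2$) is stated relative to $\tilde V_i$, the exact policy evaluation of $\hat u_i$, and the paper proves Theorem 3.1 by induction on $i$ precisely because approximation errors propagate from one iteration to the next. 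For fixed $N_1,N_2$, nothing in your argument rules out that this propagated error grows with $i$, so compactness of the limiting targets $\{V_i\}$ does not by itself yield thresholds $N_1^*,N_2^*$ valid for all $i>i^*$. To close the gap you would need a uniform-in-$i$ control of the error-propagation map---e.g., a contraction or stability property of the policy-evaluation step, or compactness of the larger family $\{\tilde V_i\}$ indexed jointly by $i$, $N_1$, $N_2$---rather than compactness of $\{V_i\}$ alone.
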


\subsection{Robust redesign}
In the presence of the dynamic uncertainty, we redesign the approximated optimal control policy so as to achieve asymptotic stability. This method is an integration of optimal control theory \cite{lewis1995optimal} with the gain assignment technique \cite{jiang1994small}, \cite{pralywang1996}.
To begin with, let us assume the following:

%\begin{assumption}
%\label{as:HJB}
%There exists a $C^1$ positive definite and proper function $V (x)$ with $V(0)=0$, satisfying \eqref{eq:HJB0}. In addition,
%%the following HJB equation:
%%\begin{eqnarray}
%%\label{eq:HJB}
%%\nabla V(x) f(x)+Q(x)-\frac{1}{4r}|\nabla  V(x) g(x)|^2=0,~~\forall x\in\mathbb{R}^n
%%\end{eqnarray}
%assume there exists a constant $\epsilon>0$ such that $Q(x)-\epsilon^2|x|^2$ is a positive definite function.
%%, and $r>0$ is a constant.
%\end{assumption}

\begin{assumption}
\label{as:HJB}
There exists a function ${\underline{\alpha}}$ of class $\mathcal{K}_{\infty}$, such that for $i=0,1,\cdots$,
\begin{eqnarray}
{\underline{\alpha}}(|x|)\le V_i (x),~~\forall x\in\mathbb{R}^n.
\end{eqnarray}
In addition,
%the following HJB equation:
%\begin{eqnarray}
%\label{eq:HJB}
%\nabla V(x) f(x)+Q(x)-\frac{1}{4r}|\nabla  V(x) g(x)|^2=0,~~\forall x\in\mathbb{R}^n
%\end{eqnarray}
assume there exists a constant $\epsilon>0$ such that $Q(x)-\epsilon^2|x|^2$ is a positive definite function.
%, and $r>0$ is a constant.
\end{assumption}

Notice that, we can also find a class $\mathcal{K}_{\infty}$ function ${\bar{\alpha}}$, such that for $i=0,1,\cdots$,
\begin{eqnarray}
V_i (x)\le{\bar{\alpha}}(|x|),~~\forall x\in\mathbb{R}^n.
\end{eqnarray}

%%% Assumption on the dynamic uncertainties
\begin{assumption}
\label{as:duc}
Consider (\ref{eq:mc1}). There exist functions ${\underline{\lambda}}, {\bar{\lambda}}\in\mathcal{K}_{\infty}$, $\kappa_1,\kappa_2,\kappa_3\in\mathcal{K}$, and  positive definite functions $W$ and $\kappa_4$,  such that for all $w\in\mathbb{R}^{p}$ and $x\in\mathbb{R}^{n}$, we have
\begin{eqnarray}
&{{\underline{\lambda}}}(|w|)\le W(w)\le{{\bar{\lambda}}}(|w|),&\\
&|\Delta(w,x)|\le \max\{\kappa_1(|w|),\kappa_2(|x|)\},&
\end{eqnarray}
together with the following implication:
\begin{equation}
W(w)\ge \kappa_3 (|x|)\Rightarrow \nabla W(w)\Delta_w(w,x)\le-\kappa_4(w).
\end{equation}
\end{assumption}
Assumption \ref{as:duc} implies that the $w$-system \eqref{eq:mc1} is input-to-state stable (ISS) \cite{sontagbook} when $x$ is considered as the input, i.e.,
\begin{eqnarray}
|w(t)|\le \beta_w(|w(0)|,t)+\gamma_w(\|x\|)
\end{eqnarray}
where $\beta_w$ is of class $\mathcal{KL}$ and $\gamma_w$ is of class $\mathcal{K}$.% \cite{Khalil2002}.

%
%
%Let $V(x)$ be the solution of the HJB:
%\begin{eqnarray}
%\nabla V(x)f(x)+Q(x)-\frac{1}{4r}|\nabla V(x)g(x)|^{2}=0
%\end{eqnarray}
%where $Q(x)$ is a positive definite function, $r>0$ is a constant number.

Now, consider the following type of control policy
\begin{eqnarray}
u_{ro}(x)=\left[1+\frac{r}{2}\rho^{2}(|x|^2)\right]\hat{u}_{i^*+1}(x) \label{eq:acon}\label{eq:acon1}
\end{eqnarray}
where $i^*>0$ is a sufficiently large integer as defined in Corollary \ref{cor:con}, $\rho$ is a smooth, non decreasing function, with $\rho(s)>0$ for all $s\ge 0$. Notice that $u_{ro}$ can be viewed as a robust redesign of the approximated optimal control law
$\hat{u}_{i^*+1}$.

As in \cite{jiang1997}, let us define a class $\mathcal{K}_\infty$ function $\gamma$ by
\begin{eqnarray}
\label{eq:gammafun}
\gamma(s)=\frac{1}{2}\epsilon\rho(s^2)s,~~\forall s\ge 0.
\end{eqnarray}
%Then, $\gamma$ is of class $\mathcal{K}_\infty$.

In addition, define
\begin{eqnarray}
e_{ro}(x)&=&\frac{r}{2}\rho^{2}(|x|^2)\left[\hat{u}_{i^*+1}(x)-{u}_{i^*+1}(x)\right]\nonumber\\
&&+\hat{u}_{i^*+1}(x)-u_{i^*}(x).
\end{eqnarray}

%
%and let $e_{ro}$ be a function of class $\mathcal{K}$ such that $e_{ro}(|x|)\ge |e_{ro}(x)|$, $\forall x\in\mathbb{R}^n$.

%Then, we are ready to give gain conditions for asymptotic stability as follows:
\begin{theorem}
Under Assumptions \ref{as:HJB} and \ref{as:duc}, suppose
\begin{eqnarray}
{\gamma}>\max\{\kappa_2,\kappa_1\circ{\underline{\lambda}}^{-1}\circ\kappa_3\circ{\underline{\alpha}}^{-1}\circ{\bar{\alpha}}\},
\label{eq:sg_cd}
\end{eqnarray}
and the following implication holds for some constant $d>0$:
\begin{eqnarray}
0<V_{i^*}(x)\le  d \Rightarrow |e_{ro}(x)|<{\gamma}(|x|).
\label{eq:sg_cd_add}
\end{eqnarray}
Then, the closed-loop system composed of (\ref{eq:mc1}), (\ref{eq:mc2}), and (\ref{eq:acon1}) is asymptotically stable at the origin. In addition, there exists $\sigma\in\mathcal{K}_{\infty}$, such that
$\Omega_{i^*}=\{(w,x): \max\left[\sigma(V_{i^*}(x)),W(w)\right]\le \sigma(d)\}$
is an estimate of the region of attraction of the closed-loop system.
\end{theorem}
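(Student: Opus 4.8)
The plan is to show that the robust redesign \eqref{eq:acon1} renders the $x$-subsystem \eqref{eq:mc2} input-to-state stable with respect to $w$, with an \emph{arbitrarily small} gain controlled by the design function $\rho$, and then to close the loop with the ISS $w$-subsystem \eqref{eq:mc1} (guaranteed by Assumption \ref{as:duc}) through the nonlinear small-gain theorem \cite{jiang1994small}. The cost iterate $V_{i^*}$ will serve as an ISS-Lyapunov function for the $x$-subsystem, while $W$ plays the same role for the $w$-subsystem. Since the bound \eqref{eq:sg_cd_add} on $e_{ro}$ is only available on the sublevel set $\{V_{i^*}(x)\le d\}$, the ISS property I obtain is regional, so the small-gain argument must be localized, and this localization is exactly what produces the region-of-attraction estimate $\Omega_{i^*}$.

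First I would differentiate $V_{i^*}$ along \eqref{eq:mc2} with $u=u_{ro}$. Adding and subtracting $g(x)u_{i^*}(x)$ and invoking the policy-iteration identity \eqref{eq:pi1} together with the update rule \eqref{eq:pi2} (so that $\nabla V_{i^*}g=-2ru_{i^*+1}$) gives
\begin{eqnarray*}
\dot V_{i^*}=-Q(x)-ru_{i^*}^2-2ru_{i^*+1}\left[(u_{ro}-u_{i^*})+\Delta\right].
\end{eqnarray*}
The key algebraic observation is the identity $u_{ro}-u_{i^*}=\tfrac{r}{2}\rho^2(|x|^2)u_{i^*+1}+e_{ro}$, which follows directly from \eqref{eq:acon1} and the definition of $e_{ro}$. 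Substituting it and completing the square in $u_{i^*+1}$ lets the indefinite term $-r^2\rho^2u_{i^*+1}^2$ absorb the cross terms, leaving
\begin{eqnarray*}
\dot V_{i^*}\le -Q(x)+\frac{\left(|e_{ro}|+|\Delta|\right)^2}{\rho^2(|x|^2)}.
\end{eqnarray*}
I would then use Assumption \ref{as:HJB} to write $Q(x)\ge \epsilon^2|x|^2+\hat Q(x)$ with $\hat Q$ positive definite. On $\{0<V_{i^*}(x)\le d\}$ the bound \eqref{eq:sg_cd_add} gives $|e_{ro}|<\gamma(|x|)$, and because $\gamma>\kappa_2$ the part $\kappa_2(|x|)$ of $|\Delta|\le\max\{\kappa_1(|w|),\kappa_2(|x|)\}$ is always dominated by $\gamma(|x|)$. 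Hence, \emph{provided} $\kappa_1(|w|)\le\gamma(|x|)$, one has $|e_{ro}|+|\Delta|<2\gamma(|x|)$, and the definition $\gamma(s)=\tfrac12\epsilon\rho(s^2)s$ yields $(|e_{ro}|+|\Delta|)^2/\rho^2(|x|^2)<\epsilon^2|x|^2$, so that $\dot V_{i^*}<-\hat Q(x)<0$ for $x\neq0$. This is the regional ISS-Lyapunov inequality for the $x$-subsystem.

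Next I would convert the two dissipation conditions into comparable gains between $V_{i^*}$ and $W$. Using $\underline\alpha(|x|)\le V_{i^*}(x)\le\bar\alpha(|x|)$ and $\underline\lambda(|w|)\le W(w)\le\bar\lambda(|w|)$, the condition $\kappa_1(|w|)\le\gamma(|x|)$ is implied by $V_{i^*}(x)\ge\chi_1(W(w))$ with $\chi_1=\bar\alpha\circ\gamma^{-1}\circ\kappa_1\circ\underline\lambda^{-1}$, so that $V_{i^*}(x)\ge\chi_1(W(w))\Rightarrow\dot V_{i^*}<0$ on $\{V_{i^*}\le d\}$; similarly the implication in Assumption \ref{as:duc} becomes $W(w)\ge\chi_2(V_{i^*}(x))\Rightarrow\dot W<0$ with $\chi_2=\kappa_3\circ\underline\alpha^{-1}$. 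The small-gain condition I must verify is $\chi_1\circ\chi_2<\mathrm{Id}$, i.e. $\bar\alpha\circ\gamma^{-1}\circ\kappa_1\circ\underline\lambda^{-1}\circ\kappa_3\circ\underline\alpha^{-1}<\mathrm{Id}$; evaluating the hypothesis $\gamma>\kappa_1\circ\underline\lambda^{-1}\circ\kappa_3\circ\underline\alpha^{-1}\circ\bar\alpha$ at the point $\bar\alpha^{-1}(s)$ and using strict monotonicity of $\gamma$ gives exactly this strict composition bound.

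Finally, to obtain stability and the region of attraction I would invoke the regional nonlinear small-gain theorem. Because $\chi_1\circ\chi_2<\mathrm{Id}$, there exists $\sigma\in\mathcal{K}_\infty$ with $\chi_2<\sigma<\chi_1^{-1}$ on the range relevant to $\Omega_{i^*}$; for such $\sigma$ the function $U(w,x)=\max\{\sigma(V_{i^*}(x)),W(w)\}$ is non-increasing along trajectories, since where $\sigma(V_{i^*})\ge W$ one has $V_{i^*}\ge\sigma^{-1}(W)>\chi_1(W)$ (so $\dot V_{i^*}<0$), and where $W\ge\sigma(V_{i^*})$ one has $W>\chi_2(V_{i^*})$ (so $\dot W<0$). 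Consequently $\Omega_{i^*}=\{(w,x):\max[\sigma(V_{i^*}(x)),W(w)]\le\sigma(d)\}$ is positively invariant; moreover $\sigma(V_{i^*})\le\sigma(d)$ forces $V_{i^*}\le d$ throughout $\Omega_{i^*}$, so the bound \eqref{eq:sg_cd_add} used above remains valid along every trajectory starting in $\Omega_{i^*}$, and $U$ decreases to zero, giving asymptotic stability with $\Omega_{i^*}$ as an estimate of the region of attraction. I expect the main obstacle to be precisely this last, regional step: the ISS inequality for $x$ only holds inside $\{V_{i^*}\le d\}$, so one cannot quote a global small-gain theorem and must instead build $\sigma$ and verify invariance of $\Omega_{i^*}$ by hand, taking care that the strict inequalities ($\gamma>\kappa_2$, the strict composition bound, and the strict bound on $e_{ro}$) supply the margin needed to keep the level set invariant rather than merely non-expanding.
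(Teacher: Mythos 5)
Your proposal is correct, and its core is the same as the paper's: the same completion-of-squares computation giving $\dot V_{i^*}\le -Q(x)+(|e_{ro}|+|\Delta|)^2/\rho^2(|x|^2)$, the same gain functions $\chi_1=\bar\alpha\circ\gamma^{-1}\circ\kappa_1\circ\underline\lambda^{-1}$ and $\chi_2=\kappa_3\circ\underline\alpha^{-1}$, the same verification of $\chi_1\circ\chi_2<\mathrm{Id}$ from \eqref{eq:sg_cd} by evaluating at $\bar\alpha^{-1}(s)$, and the same max-type function $\max\{\sigma(V_{i^*}),W\}$ defining $\Omega_{i^*}$. The one place you diverge is in how the regional restriction $\{V_{i^*}\le d\}$ is handled. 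The paper truncates the approximation error, setting $\bar e_{ro}=e_{ro}$ on $\{V_{i^*}\le d\}$ and $\bar e_{ro}=0$ outside, and writes the closed loop as a \emph{globally defined} auxiliary system under the generalized control $u=u_{i^*}+\frac{r}{2}\rho^2(|x|^2)u_{i^*+1}+\bar e_{ro}$; this lets it invoke Theorem 3.1 of \cite{jiang1996a} (the global Lyapunov small-gain theorem) off the shelf to get global asymptotic stability of the auxiliary system, and then $\Omega_{i^*}$ is an attraction estimate for the true system simply because the two systems coincide there. You instead keep the true control \eqref{eq:acon1} throughout, accept that the dissipation inequality is only regional, and rebuild the Lyapunov small-gain argument by hand: constructing $\sigma$ between $\chi_2$ and $\chi_1^{-1}$, checking that $\Omega_{i^*}$ forces $V_{i^*}\le d$ so the $e_{ro}$ bound stays in force, and proving positive invariance and decrease directly. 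Both routes are sound; the paper's truncation trick outsources the nonsmooth analysis of the max-function (decrease at points where $\sigma(V_{i^*})=W$, which needs a Dini/Clarke-type argument) to the cited theorem, while your version is more self-contained but must supply exactly that analysis -- you assert the decrease of $U$ at equality points rather than proving it, which is the only detail you leave implicit that the paper's citation covers.
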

\begin{proof}
See the Appendix.
\end{proof}

\begin{remark}
In the absence of the dynamic uncertainty (i.e., $\Delta=0$ and the $w$-system is absent), the control policy \eqref{eq:acon} can be replaced by $\hat{u}_{i^*+1}(x)$, which is an approximation of the optimal control policy $u^*(x)$ that minimizes the following cost function
\begin{eqnarray}
J(x_0)=\int_0^{\infty}\left[Q(x)+{r}u^2\right]dt,~~x(0)=x_0. \label{eq:cost}
\end{eqnarray}
\end{remark}

%%%%%%%%%%%%%%%%%%%%%%%%%%%%%%%%%%%%
%%%%%%%%%  Unmatched Case %%%%%%%%%%
%%%%%%%%%%%%%%%%%%%%%%%%%%%%%%%%%%%%

\section{Robust-ADP with unmatched dynamic uncertainty}
%\subsection{Robust-ADP design with unmatched uncertainty}

In this section, we extend the robust-ADP methodology to nonlinear systems with unmatched dynamic uncertainties.
%, by a combined application of the backstepping technique \cite{Krstic-book} and the nonlinear small-gain technique \cite{jiang1997}, \cite{jiang1994small}.
To begin with, consider the system:
%Consider the system
\begin{eqnarray}
\dot w &=& \Delta_w(w,x)                                     \label{eq:muc1}       \\
\dot{x} & = & f(x)+g(x)\left[z+\Delta(w,x)\right]        \label{eq:muc2}       \\
\dot{z} & = & f_{1}(x,z)+u+\Delta_{1}(w,x,z)             \label{eq:muc3}
%\vdots                                                              \\
%\dot{z_q} & = & f_{q}(x,z_q,z_2,\cdots,z_q)+u+\Delta_{q}(w,x) \label{eq:muc4}       \\
\end{eqnarray}
where $[x^T,z]^T\in\mathbb{R}^{n}\times \mathbb{R}$ is the measured component of the state available for feedback control; $w$, $u$, $\Delta_w$, $f$, $g$, and $\Delta$ are defined in the same way as in \eqref{eq:mc1}-\eqref{eq:mc2}; $f_1:\mathbb{R}^{n}\times \mathbb{R}\rightarrow \mathbb{R}$  and $\Delta_1:\mathbb{R}^{p}\times \mathbb{R}^n\times \mathbb{R}\rightarrow \mathbb{R}$ are locally Lipschitz functions and are assumed to be unknown.
\begin{assumption}
\label{as:d1}
There exist class $\mathcal{K}$ functions $\kappa_5,\kappa_6,\kappa_7$, such that the following inequality holds:
\begin{eqnarray}
|\Delta_1(w,x,z)|\le\max\{\kappa_{5}\left(|w|\right),\kappa_{6}\left(|x|\right),\kappa_{7}\left(|z|\right)\}.
\end{eqnarray}
\end{assumption}

%Without loss of generality, we can assume $\kappa_5<\kappa_1$, and $\kappa_6<\kappa_2$.

\subsection{Online learning}
Let us define a virtual control policy $\xi=u_{ro}$, as defined in \eqref{eq:acon}. Then, a state transformation can be performed as $\zeta=z-\xi$. Along the trajectories of \eqref{eq:muc2}-\eqref{eq:muc3}, it follows that
\begin{eqnarray}
\dot{\zeta}
=\bar f_{1}(x,z)+u+\Delta_{1}-\bar g_1(x)\Delta
\end{eqnarray}
where
$\bar f_1(x,z)=f_{1}(x,z)-\frac{\partial\xi}{\partial x}f(x)-\frac{\partial\xi}{\partial x}g(x)z$, and
$\bar g_1(x)=\frac{\partial \xi}{\partial x}g(x)$
are two unknown functions that can be approximated by
%\begin{eqnarray}
$\hat{f}_1(x,z)=\sum_{j=1}^{N_3}\hat{w}_{f,j}\psi_j(x,z)$ and
$\hat{g}_1(x)=\sum_{j=0}^{N_4-1}\hat{w}_{g,j}\phi_j(x)$, respectively,
%\end{eqnarray}
where $\{\psi_j(x,z)\}_{j=1}^{\infty}$ is a sequence of linearly independent basis functions on some compact set $\Omega_1\in\mathbb{R}^{n+1}$ containing the origin as its interior, $\phi_0(x)\equiv 1$, $\hat{w}_{f,j}$ and $\hat{w}_{g,j}$ are constant weights to be trained. As in the matched case, $\Omega_1$ is selected to be an invariant set for the system \eqref{eq:muc2} and \eqref{eq:muc3}.

\subsubsection{Phase-one learning}

To approximate the virtual control input $\xi$ for the $x$-subsystem, the same procedure as in \eqref{eq:onlineformular} can be applied,  with $\hat{v}_i=z+\Delta-\hat{u}_i$.

\subsubsection{Phase two learning}
To approximate the unknown functions $\bar{f}_1$ and $\bar{g}_1$,
The constant weights can be solved, in the sense of least-squares, from
\begin{eqnarray}
%&&{V}_{i^*}(x(t_{k+1}'))-{V}_{i^*}(x(t_k'))+\frac{1}{2}\zeta^2(t_{k+1}')-\frac{1}{2}\zeta^2(t_{k}')\nonumber\\
%%&&U(X_1(t+T))-U(X_1(t))\nonumber\\
%%
%%
%&=&\int_{t_k'}^{t_{k+1}'}\left[-Q(x)dt-ru_{ro}^2(x)\right]dt\nonumber\\
%%
%%
%&&-\int_{t_k'}^{t_{k+1}'}\!2r
%\hat{u}_{i^*+1}(\zeta\!+\!\Delta)dt\nonumber\\
%%
%
&&\frac{1}{2}\zeta^2(t_{k+1}')-\frac{1}{2}\zeta^2(t_{k}')\nonumber\\
&=&\int_{t_k'}^{t_{k+1}'}\left[\sum_{j=1}^{N_3}\hat{w}_{f,j}\psi_j(x,z)-\sum_{j=0}^{N_4-1}\hat{w}_{g,j}\phi_j(x)\Delta\right] \zeta dt\nonumber\\
&&+\int_{t_k'}^{t_{k+1}'}(u+\Delta_1)\zeta dt+\bar{e}_{k}\label{eq:eq2}
\end{eqnarray}
where $\{t_{k}'\}_{k=1}^{l}$ is a strictly increasing positive constant sequence with $l>0$ a sufficiently large integer, and $\bar{e}_k$ denotes the approximation error. Similarly as in the previous section, let us introduce the following assumption:
\begin{assumption}
\label{as:PE1}
There exist $l_1>0$ and $\delta_1>0$, such that for all $l\ge l_1$, we have
\begin{eqnarray}
\frac{1}{l}\sum\limits_{k=0}^{l}\bar{\theta}_{k}^T\bar{\theta}_{k}\ge \delta_1 I_{N_3+N_4}
\end{eqnarray}
where
\begin{eqnarray*}
\bar{\theta}_{k}^T=\left[\begin{array}{c}
 \int_{t_{k}'}^{t_{k+1}'}\psi_{1}(x,z )\zeta dt\\
  \int_{t_{k}'}^{t_{k+1}'}\psi_{2}(x,z )\zeta dt\\
   \vdots\\
   \int_{t_{k}'}^{t_{k+1}'}\psi_{N_{3}}(x,z )\zeta dt\\
 \int_{t_{k}'}^{t_{k+1}'}\phi_{0}(x )\Delta\zeta dt\\
  \int_{t_{k}'}^{t_{k+1}'}\phi_{1}(x )\Delta\zeta dt\\
   \vdots\\
   \int_{t_{k}'}^{t_{k+1}'}\phi_{N_{4}-1}(x )\Delta\zeta dt\end{array}\right]\in\mathbb{R}^{N_3+N_4}.
\end{eqnarray*}
\end{assumption}

\begin{theorem}
\label{thm:con2}
Consider $(x(0),z(0))\in\Omega_1$. Then, under Assumption \ref{as:PE1} we have
\begin{eqnarray}
\lim_{N_{3},N_{4}\rightarrow\infty}\hat{f}(x,z)&=&\bar{f}_{1}(x,z),\\
\lim_{N_{3},N_{4}\rightarrow\infty}\hat{g}(x)~~~&=&\bar{g}_{1}(x),~~\forall (x,z)\in\Omega_1.
\end{eqnarray}
\end{theorem}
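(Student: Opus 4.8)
The plan is to mirror the convergence argument used for the phase-one theorem of Section~III, adapting it to the scalar $\zeta$-subsystem. First I would establish the exact counterpart of \eqref{eq:eq2}: multiplying the transformed dynamics $\dot\zeta=\bar f_1(x,z)+u+\Delta_1-\bar g_1(x)\Delta$ by $\zeta$ and integrating over $[t_k',t_{k+1}']$ gives the identity $\tfrac12\zeta^2(t_{k+1}')-\tfrac12\zeta^2(t_k')=\int_{t_k'}^{t_{k+1}'}[\bar f_1-\bar g_1\Delta]\zeta\,dt+\int_{t_k'}^{t_{k+1}'}(u+\Delta_1)\zeta\,dt$, which holds with \emph{zero} residual when the true $\bar f_1,\bar g_1$ are used. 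Subtracting this from \eqref{eq:eq2} shows that the least-squares residual $\bar e_k$ is nothing but the function-approximation errors $\bar f_1-\hat f_1$ and $\bar g_1-\hat g_1$ integrated against $\zeta$ and $\Delta\zeta$, respectively. Since $(x(0),z(0))\in\Omega_1$ and $\Omega_1$ is invariant, the trajectory stays in a compact set, so $|\zeta|$ and $|\Delta|$ are bounded there by some constant $C$.

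Next I would invoke completeness of the bases. As $\{\psi_j\}$ and $\{\phi_j\}$ are linearly independent (and assumed complete) on $\Omega_1$ and its projection, and $\bar f_1,\bar g_1$ are continuous, there exist ideal coefficient sequences $\{w_{f,j}^*\},\{w_{g,j}^*\}$ whose partial sums converge uniformly to $\bar f_1,\bar g_1$. Let $W^*$ collect the first $N_3+N_4$ ideal coefficients and let $\tau_k$ be the residual produced in \eqref{eq:eq2} when $W^*$ is substituted; then $\tau_k$ is the integral of the uniform truncation remainder against $\zeta$ (and $\Delta\zeta$), so $\max_k|\tau_k|\le \epsilon_N\,C\,(t_{k+1}'-t_k')$ with $\epsilon_N\to0$ as $N_3,N_4\to\infty$.

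I would then cast \eqref{eq:eq2} as the linear regression $y_k=\bar\theta_k^T\hat W+\bar e_k$, with $\bar\theta_k$ as in Assumption~\ref{as:PE1} and the sign of the $\hat g_1$-block absorbed into $\hat W$. Stacking over $k$ and using Assumption~\ref{as:PE1}, the information matrix satisfies $\sum_k\bar\theta_k\bar\theta_k^T\ge l\delta_1 I_{N_3+N_4}$, so the normal equations have a unique solution and $\hat W-W^*=\big(\sum_k\bar\theta_k\bar\theta_k^T\big)^{-1}\sum_k\bar\theta_k\tau_k$. Bounding the pseudoinverse by $\sigma_{\min}^{-1}\le(l\delta_1)^{-1/2}$ yields $|\hat W-W^*|\le(l\delta_1)^{-1/2}\big(\sum_k\tau_k^2\big)^{1/2}\le \epsilon_N C'/\sqrt{\delta_1}$, where the last step uses $\big(\sum_k\tau_k^2\big)^{1/2}\le \epsilon_N C'\sqrt{l}$. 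Letting $N_3,N_4\to\infty$ forces $\hat W\to W^*$, and combined with uniform convergence of the ideal partial sums this gives $\hat f_1\to\bar f_1$ and $\hat g_1\to\bar g_1$ pointwise on $\Omega_1$.

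The main obstacle is precisely this last uniformity-in-dimension step. As $N_3,N_4\to\infty$ the regressor dimension and the required number of samples $l$ grow together, so the accumulating truncation residual $\big(\sum_k\tau_k^2\big)^{1/2}$ grows like $\sqrt{l}$ while the smallest singular value of the regressor grows like $\sqrt{l\delta_1}$; the convergence survives only because these two rates cancel. This cancellation hinges on the persistence-of-excitation constant $\delta_1$ in Assumption~\ref{as:PE1} being independent of $N_3,N_4$, which is the crucial hypothesis that keeps $\sigma_{\min}$ bounded below uniformly and prevents the tail from dominating the estimate.
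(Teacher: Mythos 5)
Your proposal is correct and takes essentially the same approach as the paper: the paper omits the proof, stating it follows the idea of Theorem 3.1, and that appendix argument is exactly your scheme---write the exact integral identity with zero residual, identify the least-squares residual with the basis-truncation tail, and use the persistence-of-excitation condition (with $\delta_1$ independent of $N_3,N_4$, the crucial point you correctly flag) to force the coefficient error to vanish along with the truncation error. The only cosmetic difference is that you bound the coefficient error through the normal equations and a projection/pseudoinverse estimate, whereas the paper uses the least-squares suboptimality inequality $\sum_k e_k^2\le\sum_k\xi_k^2$ together with a quadratic-form identity; both routes give the same $\max_k|\xi_k|/\sqrt{\delta_1}$-type bound.
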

Theorem \ref{thm:con2} can be proved following the same idea as in the proof of Theorem 3.1, and is omitted here for want of space.

\subsection{Robust redesign}
Next, we study robust stabilization of the system \eqref{eq:muc1}-\eqref{eq:muc3}. To this end, let $\kappa_8$ be a function of $\mathcal{K}$ such that
\begin{eqnarray}
\kappa_8(|x|)\ge |\xi(x)|,~~\forall x\in\mathbb{R}^n.
\end{eqnarray}
Then, Assumption \ref{as:d1} implies
%\begin{eqnarray}
%|\Delta_1|\le\max\{\kappa_{5}\left(|w|\right),\kappa_{9}\left(|X_1|\right)\}
%\end{eqnarray}
\begin{eqnarray*}
|\Delta_1|
&\le&\max\{\kappa_{5}\left(|w|\right),\kappa_{6}\left(|x|\right),\kappa_{7}\left(|z|\right)\}\\
&\le&\max\{\kappa_{5}\left(|w|\right),\kappa_{6}\left(|x|\right),\kappa_{7}\left(|\xi|+\kappa_8(|x|)\right)\}\\
&\le&\max\{\kappa_{5}\left(|w|\right),\kappa_{9}\left(|X_1|\right)\}
\end{eqnarray*}
where $\kappa_9 (s) = \max\{\kappa_6,\kappa_7\circ\kappa_8\circ(2s),\kappa_7\circ(2s)\}$,
$\forall s \geq 0$. In addition, we denote $\tilde\kappa_1=\max\{\kappa_1,\kappa_5\}$, $\tilde\kappa_2=\max\{\kappa_2,\kappa_9\}$,  $\gamma_1(s)=\frac{1}{2}\epsilon \rho(\frac{1}{2}s^2)s$, and
\begin{eqnarray}
U_{i^*}(X_1)=V_{i^*}(x)+\frac{1}{2}\zeta^{2}.
\end{eqnarray}
%Also, define
%\begin{eqnarray}
%
%\end{eqnarray}
%
%Now, let us define
%\begin{eqnarray}
%%U(X_1)=V(x)+\frac{1}{2}\zeta^{2}.
%\end{eqnarray}
Notice that, under Assumptions \ref{as:HJB} and \ref{as:duc}, there exist $\bar\alpha_1,\underline\alpha_1\in \mathcal{K}_{\infty}$, such that
$\underline\alpha_1(|X_1|)\le U_{i^*}(X_1)\le \bar\alpha_1(|X_1|)$.

The control policy can be approximated by
\begin{eqnarray}
\!\!{u}_{ro1}
\!\!&\!\!=\!\!&\!\!-\hat{f}_1(x,z)
%+2r{\bf w}_{2,i^*+1}^T\Phi_2(x)\nonumber\\
+2r\hat{u}_{i^*+1}(x)\nonumber\\
&&-\frac{\hat{g}^2(x)\rho_1^2(|X_1|^2)\zeta}{4}-\epsilon^2\zeta\label{eq:uapp2}\\
&&-\frac{\rho_{1}^2(|X_1|^2)\zeta}{4}-\frac{\epsilon^2\rho^2(\zeta^2)\zeta}{2\rho^2(|x|^2)} \nonumber
\end{eqnarray}
where ${X}_1=[x^T,\zeta]^T$, and $\rho_{1}(s)=2\rho(\frac{1}{2}s)$.
%, and %$\rho_{2}(s)=\rho(\frac{1}{2}s)$.

Next, define the approximation error as
\begin{eqnarray}
\!\!e_{ro1}(X_1)
\!\!&\!\!=\!\!&\!\!-\bar{f}_1(x,z)+\hat{f}_1(x,z)\nonumber\\
&&+2r\left[{u}_{i^*+1}(x)-\hat{u}_{i^*+1}(x)\right]\nonumber\\
&&-\frac{\left[\bar{g}_1^2(x)-\hat{g}_1^2(x)\right]\rho_1^2(|X_1|^2) \zeta}{4}
\end{eqnarray}
Then, conditions for asymptotic stability are summarized in the following Theorem:

\begin{theorem}
Under Assumptions \ref{as:HJB}, \ref{as:duc}, and \ref{as:d1}, if
\begin{eqnarray}
\label{eq:sg2}
\gamma_1>\max\{\tilde\kappa_2,\tilde\kappa_1\circ\underline\lambda^{-1}\circ\kappa_3\circ\underline\alpha_1^{-1}\circ\bar\alpha_1\},
\end{eqnarray}
and if the following implication holds for some constant $d_1>0$:
\begin{eqnarray*}
0<U_{i^*}(X_1)\le  d_1\Rightarrow \max\{|e_{ro1}(X_1)|,|e_{ro}(x)|\}<{\gamma}_1(|X_1|),
\label{eq:sg_cd_add}
\end{eqnarray*}
then the closed-loop system comprised of \eqref{eq:muc1}-\eqref{eq:muc3}, and \eqref{eq:uapp2} is asymptotically stable at the origin. In addition, there exists $\sigma_1\in\mathcal{K}_{\infty}$, such that
\begin{eqnarray*}
{\Omega}_{1,i^*}=\{(w,X_1): \max\left[\sigma_1(U_{i^*}(X_1)),W(w)\right]\le \sigma_1(d_1)\}
\end{eqnarray*}
is an estimate of the region of attraction.
\end{theorem}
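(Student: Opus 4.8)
The plan is to follow the backstepping-plus-nonlinear-small-gain route used for the matched case, now with the augmented ISS-Lyapunov function $U_{i^*}(X_1)=V_{i^*}(x)+\frac{1}{2}\zeta^2$ for the $(x,\zeta)$-subsystem, treating $w$ as the external dynamic uncertainty. First I would differentiate $U_{i^*}$ along the closed-loop trajectories of \eqref{eq:muc2}--\eqref{eq:muc3} with $u=u_{ro1}$. For the $V_{i^*}(x)$ part, since $z=\zeta+\xi=\zeta+u_{ro}$, the signal entering through $g(x)$ is $\zeta+u_{ro}+\Delta$; substituting the policy-iteration identities \eqref{eq:pi1}--\eqref{eq:pi2} and the definition of $u_{ro}$, then completing the square on the $-r^2\rho^2u_{i^*+1}^2$ term (and discarding $-ru_{i^*}^2\le 0$) gives, exactly as in the matched analysis,
\[
\dot V_{i^*}\le -Q(x)-2ru_{i^*+1}(x)\zeta+\frac{[e_{ro}(x)+\Delta]^2}{\rho^2(|x|^2)}.
\]
For the $\frac{1}{2}\zeta^2$ part I would substitute $\dot\zeta=\bar f_1+u_{ro1}+\Delta_1-\bar g_1\Delta$ together with the explicit controller \eqref{eq:uapp2}.

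The second step is the bookkeeping showing that $u_{ro1}$ is built precisely to neutralise every indefinite term. The component $2r\hat u_{i^*+1}\zeta$ cancels the cross-coupling $-2ru_{i^*+1}\zeta$ up to $2r(\hat u_{i^*+1}-u_{i^*+1})\zeta$; the component $-\hat f_1$ cancels $\bar f_1$ up to $(\bar f_1-\hat f_1)\zeta$; and $-\tfrac14\hat g_1^2\rho_1^2\zeta$ lets one complete the square on $-\bar g_1\Delta\zeta$, at the price of $\tfrac14(\bar g_1^2-\hat g_1^2)\rho_1^2\zeta^2+\Delta^2/\rho_1^2$. By design these three residual contributions sum to exactly $-\zeta\,e_{ro1}(X_1)$, while the damping terms $-\epsilon^2\zeta^2$, $-\tfrac14\rho_1^2\zeta^2$ and $-\tfrac12\epsilon^2\rho^2(\zeta^2)\zeta^2/\rho^2(|x|^2)$ supply outright negativity in $\zeta$, complete the square against $\Delta_1\zeta$ (bounded via Assumption \ref{as:d1}), and absorb the $x$/$\zeta$ interconnection mismatch caused by using $\rho(|x|^2)$ in the $\dot V_{i^*}$ estimate while the target gain $\gamma_1$ is built from $\rho(\tfrac12|X_1|^2)$, respectively. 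Collecting terms and invoking the region hypothesis $0<U_{i^*}\le d_1\Rightarrow\max\{|e_{ro1}|,|e_{ro}|\}<\gamma_1(|X_1|)$ together with $Q(x)-\epsilon^2|x|^2>0$, I expect to arrive at the ISS-type estimate: on $\{U_{i^*}\le d_1\}$, $\dot U_{i^*}$ is negative definite in $X_1$ whenever $\gamma_1(|X_1|)>\tilde\kappa_1(|w|)$, where $\tilde\kappa_1=\max\{\kappa_1,\kappa_5\}$ and $\tilde\kappa_2=\max\{\kappa_2,\kappa_9\}$ collect the dependence of $\Delta$ and $\Delta_1$ on $w$ and on $(x,z)$.

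With this gain margin in hand, the third step treats the overall system as the feedback interconnection of the ISS $(x,\zeta)$-subsystem (Lyapunov function $U_{i^*}$, sandwiched by $\underline\alpha_1(|X_1|)\le U_{i^*}\le\bar\alpha_1(|X_1|)$) and the ISS $w$-subsystem furnished by Assumption \ref{as:duc} (gain $\underline\lambda^{-1}\circ\kappa_3$). I would then apply the nonlinear small-gain theorem \cite{jiang1994small}: the loop-gain composition is a contraction precisely under hypothesis \eqref{eq:sg2}, namely $\gamma_1>\max\{\tilde\kappa_2,\tilde\kappa_1\circ\underline\lambda^{-1}\circ\kappa_3\circ\underline\alpha_1^{-1}\circ\bar\alpha_1\}$, which yields asymptotic stability of the origin. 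Finally, for the region-of-attraction estimate I would mirror the matched case: select $\sigma_1\in\mathcal{K}_\infty$ from the small-gain construction so that the sublevel set $\Omega_{1,i^*}=\{(w,X_1):\max[\sigma_1(U_{i^*}(X_1)),W(w)]\le\sigma_1(d_1)\}$ is forward invariant and contained in the region $U_{i^*}\le d_1$ on which the estimate is valid.

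The hard part is the second step. Everything hinges on verifying that the six components of $u_{ro1}$ genuinely collapse the indefinite part of $\dot U_{i^*}$ into $-\zeta\,e_{ro1}$ plus dominated remainders, and in particular that the unusual damping term $-\tfrac12\epsilon^2\rho^2(\zeta^2)\zeta/\rho^2(|x|^2)$ compensates the regime $|\zeta|>|x|$, where $\rho(|x|^2)<\rho(\tfrac12|X_1|^2)$, so that the $\dot V_{i^*}$ remainder $[e_{ro}+\Delta]^2/\rho^2(|x|^2)$ can still be majorised by a function of $\gamma_1(|X_1|)$. Matching the $\tfrac12$-scalings embedded in $\gamma_1(s)=\tfrac12\epsilon\rho(\tfrac12 s^2)s$ and $\rho_1(s)=2\rho(\tfrac12 s)$ against the $\rho(|x|^2)$-weighted quantities, so that the single clean condition \eqref{eq:sg2} emerges, is the delicate calculation; the small-gain closure and the region-of-attraction estimate are then routine adaptations of the matched-case proof.
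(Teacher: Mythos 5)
Your proposal is correct and follows essentially the same route as the paper's proof: differentiating $U_{i^*}(X_1)=V_{i^*}(x)+\frac{1}{2}\zeta^2$ along the closed loop, verifying that the terms of \eqref{eq:uapp2} cancel the cross-coupling and approximation mismatches so that the residual collapses to $-\zeta\,e_{ro1}(X_1)$ plus square-completed remainders, deriving the ISS gain implication for the $(x,\zeta)$-subsystem, and closing with the nonlinear small-gain theorem and the $\sigma_1$-construction for the region-of-attraction estimate. The only cosmetic difference is that the paper makes the local analysis rigorous by truncating the errors ($\bar{e}_{ro1}$ and $\bar{\bar{e}}_{ro}$ set to zero outside $\{U_{i^*}\le d_1\}$) so the global small-gain result of \cite{jiang1996a} applies directly, whereas you argue forward invariance of the sublevel set; these amount to the same thing.
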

\begin{proof}
See the Appendix.
\end{proof}

\begin{remark}
In the absence of the dynamic uncertainty (i.e., $\Delta=0$, $\Delta_1=0$ and the $w$-system is absent), the smooth functions $\rho$ and $\rho_{1}$ can all be replaced by $0$, and the system dynamics becomes
\begin{eqnarray}
\dot X_1=F_1(X_1)+G_1u_{o1}
\end{eqnarray}
where $F_1(X_1)=\left[\begin{array}{c}f(x)+g(x)\zeta+g(x)\xi\\-\nabla V_{i^*}(x)g(x)\end{array}\right]$, $G_1=\left[\begin{array}{c}0\\1\end{array}\right]$, and $u_{o1}=-\epsilon^2\zeta^2$. As a result, it can be concluded that the control policy $u=u_{o1}$ is an approximate optimal control policy with respect to the cost function
\begin{eqnarray}
J_1(X_1(0))=\int_{0}^{\infty}\left[Q_{1}\left(x,\zeta\right)+\frac{1}{2\epsilon^2}u^{2}\right]dt
\end{eqnarray}
with $X_1(0)=[x_0^T,z_0-u_{i^*}(x_0)]^T$ and
$Q_{1}\left(x,\zeta\right)=Q\left(x\right)+\frac{1}{4r}\left[\nabla V_{i^*}(x)g(x)\right]^{2}+\frac{\epsilon^2}{2}\zeta^{2}.$
\end{remark}

\section{Implementation Issues}
\label{sec:imp}
In this section, we study a few implementation issues on the robust-ADP based online learning methodology, and give a practical algorithm. Due to the space limitation, we will mainly focus on the systems with matched dynamic uncertainties. These results can be easily extended to the unmatched case.

%\subsection{Invariant sets}
%
%Since the approximation only hold on some compact set $\Omega$, it is important to keep the solutions remaining in $\Omega$ at all times \cite{zhangyouping2000}. To begin with, denote $\chi_1={\bar{\alpha}}\circ\gamma^{-1}\circ\kappa_1\circ{\underline{\lambda}}^{-1}$, and $\chi_2=\kappa_3\circ{\underline{\alpha}}^{-1}$.
%Also, let $\hat{\chi}_1$ be a function of class $\mathcal{K}_{\infty}$ such that
%\begin{enumerate}
%\item $\hat{\chi}_1(s)\le\chi_1^{-1}(s)$, $\forall s\in[0,\lim\limits_{s\rightarrow\infty}\chi_1(s))$,
%\item ${\chi}_2(s)\le\hat{\chi}_1(s)$, $\forall s\ge 0$.
%\end{enumerate}
%
%Then, as shown in \cite{jiang1996a}, there exists a continuously differentiable class $\mathcal{K}_{\infty}$ function $\sigma(s)$ satisfying $\sigma'(s)>0$ and $\chi_2(s)<\sigma(s)<\hat{\chi}_1(s)$, $\forall s>0$.
%
%
%\begin{proposition} Suppose the gain condition \eqref{eq:sg_cd} is satisfied. Then, for any given constant $d>0$, the set
%\begin{eqnarray}
%\Omega_{i^*}=\{(w,x)|\le \max\left[\sigma(V_{i^*}(x)),W(w)\right]\le d\}
%\end{eqnarray}
%is an invariant set of the closed-loop system composed of \eqref{eq:mc1}, \eqref{eq:mc2}, and \eqref{eq:acon}.
%\end{proposition}

\subsection{The compact set for approximation}
\begin{assumption}
\label{as:incn}
The closed-loop system composed of \eqref{eq:mc1}, \eqref{eq:mc2}, and
\begin{eqnarray}
u=u_0(x)+e
\label{eq:initial_controller}
\end{eqnarray}
is ISS when $e$, the exploration noise, is considered to be the input.
\end{assumption}

The reason for imposing Assumption \ref{as:incn} is two fold. First, like in many other policy iteration based ADP algorithms, an initial admissible control policy is desired. In this paper we further assume the initial control policy is stabilizing in the presence of dynamic uncertainties. Such an assumption is feasible and realistic by means of the designs in \cite{jiang1997}, \cite{pralywang1996}. Second, by adding the exploration noise, we are able to satisfy Assumptions \ref{as:PE} and \ref{as:PE1}, and at the same time keep the system solutions bounded.

Under Assumption \ref{as:incn}, we can find a compact set $\Omega_0$ which is an invariant set of the closed-loop system compose of \eqref{eq:mc1}, \eqref{eq:mc2}, and $u=u_0(x)$. In addition, we can also let $\Omega_0$ contain $\Omega_{i^*}$ as its subset. Then, the compact set for approximation can be selected as $\Omega=\{x|\exists w, {\rm~s.t.}~ (x,w)\in \Omega_0 \}$.

\subsection{Two-loop optimization scheme}
In general cases, it may be difficult to determine the number of basis functions to be used for approximation. In this paper we propose a two-loop online optimization scheme as shown in Fig. \ref{fig:twoloop}. In the inner loop, least-squares method is used to train the weights. If the residual sum of errors is greater than a given threshold $\bar{\epsilon}>0$, in the outer loop the number of basis functions are increased and online data are recollected  to solve the minimization problem until sufficient small residual error can be obtained.
\begin{figure}
\centering
\includegraphics[scale=.3]{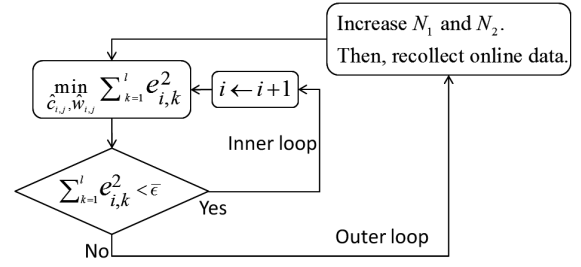}
\caption{Two-loop online optimization scheme}
\label{fig:twoloop}
\end{figure}
\subsection{Robust-ADP algorithm}

The robust-ADP algorithm is given in Algorithm \ref{alg:CAOC}.
\begin{algorithm}%{\bf Robust-ADP Algorithm}
\begin{enumerate}
%\begin{itemize}
\item[1.] Let $(w(0),x(0))\in\Omega_{i^*}$, employ the initial control policy \eqref{eq:initial_controller} and collect the system state and input information.
\item[2.] Apply the online policy iteration using \eqref{eq:onlineformular}, and redesign the control policy using \eqref{eq:acon}.
\item[3.] Terminate the exploration noise $e$.
\item[4.] If $(w(t),x(t))\in\Omega_{i^*}$, apply the approximate robust optimal control policy  \eqref{eq:acon}.
%\end{itemize}
\end{enumerate}
\caption{Robust-ADP Algorithm}
\label{alg:CAOC}
\end{algorithm}

\section{Application to a single-joint human arm movement control problem}
In this section, we apply the proposed online learning strategy to study a sensorimotor control problem. A linear version of this problem has been studied in \cite{jiang-spmb}.

Consider a single-joint arm movement as shown in Fig. \ref{fig:sj}, where the position of the elbow is fixed. The dynamic model is shown below \cite{shadbook}.
\begin{eqnarray}
I\ddot{\theta}=-mgl\cos(\theta)+n+T_m
\end{eqnarray}
where $m$ is the mass of segment, $I$ is the inertia, $g$ is the gravitational constant, $l$ is the distance of the center of mass from the joint, $\theta$ is the joint angular position, $T_m$ is the input to the muscle from the motorneurons, and $n$ denotes the inputs from the neural integrator, which can be modeled by a low pass filter as follows with a time constant $\tau_N$.
\begin{eqnarray}
\dot n=-\frac{n}{\tau_N}+T_m.
\end{eqnarray}

Let us define $x_1=\theta-\theta_0$, $x_2=\dot\theta$,
$w=n-\frac{\tau_Nmgl\cos(\theta_0)}{\tau_N+1}-Ix_2$,
$u=T_m-\frac{mgl\cos(\theta_0)}{\tau_N+1}$,
 where $\theta_0$ is the desired end point angular position.  Then, the system can be converted to
\begin{eqnarray}
\dot w_{~}&=&-\frac{1+\tau_N}{\tau_N}(w+Ix_2)\\
&&-{2mgl}\sin(\frac{x_1}{2})\sin(\frac{x_1}{2}+\theta_0)\\
\dot x_1&=&x_2\\
\dot x_2&=&\frac{2mgl}{I}\sin(\frac{x_1}{2})\sin(\frac{x_1}{2}+\theta_0)\\
&&+\frac{1}{I}\left(u+Ix_2+w\right)\nonumber
\end{eqnarray}

To apply the proposed robust-ADP method, the basis functions we used are polynomials with degrees less than or equal to five. The invariant set is chosen to contain the region $\{(w,x_1,x_2):|w|\le 1, |x_1|\le 0.8, |x_2|\le 3.5\}$. Only for simulation purpose, we set $\theta_0=\frac{\pi}{4}$, $m=1.65$, $l=0.179$, $g=9.81$, $I=0.0779$. An initial control policy is set to be $u_0=-0.5x_1-0.5x_2$. The initial condition is set to be $w(0)=1$, $x_1(0)=-\frac{\pi}{4}$, and $x_2(0)=0$. The optimal cost is specified as $J=\int_0^{\infty}\left(100x_1^2+x_2^2+u^2\right)dt$.

In this simulation, the convergence is attained after $10$ iterations. It can be seen from Fig. \ref{fig:cost} that the approximated cost function $\hat{V}_{10}(x)$ is remarkably reduced compared with the initial approximated cost $\hat{V}_{0}(x)$. Also, in Fig. \ref{fig:speed}, we compare the speed curves under the initial control policy, the policy after two iterations, and the policy after 10 iterations. Clearly, after enough iteration steps, the speed profile becomes a bell-shaped curve which is consistent with experimental observations (see, for example, \cite{atkeson}).
\begin{figure}
\centering
\includegraphics[scale=0.3]{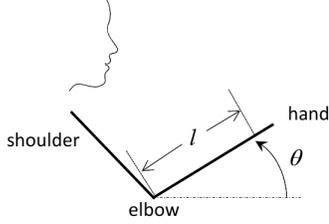}
\caption{Single-joint arm movement control problem.}
\label{fig:sj}
\end{figure}
\begin{figure}
\centering
\includegraphics[scale=0.5]{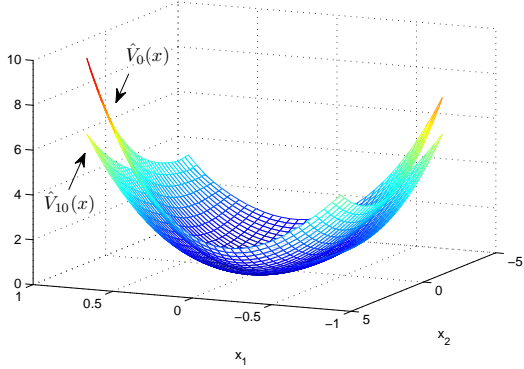}
\caption{Comparison of the approximated cost functions.}
\label{fig:cost}
\end{figure}
\begin{figure}
\centering
\includegraphics[scale=0.5]{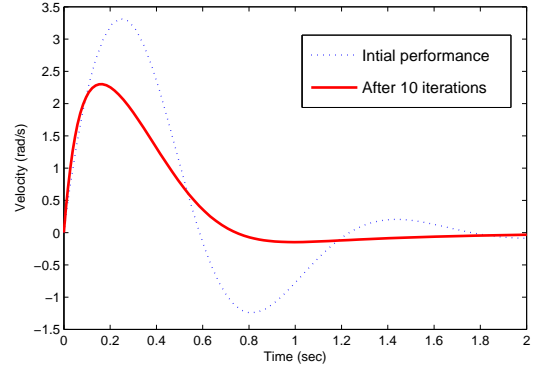}
\caption{Comparison of the speed profiles.}
\label{fig:speed}
\end{figure}

\section{Conclusions}
In this paper, computational robust optimal controller design has been studied for nonlinear systems with dynamic uncertainties.
Both the matched and the unmatched cases are studied.
%The proposed methodology allows for the presence of dynamic uncertainty with unmeasured state and unknown system order/dynamics. In addition, the theory of approximate/adaptive dynamic programming (ADP) has been integrated with tools from modern nonlinear control theory, such as the nonlinear small-gain theorem \cite{jiang1996a}, \cite{jiang1994small},for robust optimal control design. Practical robust-ADP-based online learning algorithms have been developed and  applied to the
We have presented for the first time a recursive, online, adaptive optimal controller design
when dynamic uncertainties, characterized by input-to-state stable systems with unknown order and states/dynamics, are taken into consideration. We have achieved this goal by integration of approximate/adaptive dynamic programming (ADP) theory and several tools recently developed within the nonlinear control community.
Systematic robust-ADP based online learning algorithm has been developed.
Rigorous stability analysis based on Lyapunov and small-gain techniques is carried out.
The effectiveness of the proposed methodology has been validated by its application to a single-joint arm movement control problem.

\appendix
\section*{Proof of Theorem 3.1}
To begin with, given $\hat{u}_i$, let $\tilde V_i(x)$ be the solution of the following equation with $\tilde V_i(0)=0$.
\begin{eqnarray}
\nabla{\tilde{V}_{i}}(x)\left(f(x)+g(x)\hat{u}_i(x)\right)+Q(x)+r\hat{u}_i^{2}(x)=0
\end{eqnarray}
and denote
$\displaystyle\tilde{u}_{i+1}(x) = -\frac{1}{2r}g(x)^T\nabla\tilde{V}_{i}(x)^T$.
 %Then, we give the following lemma.
\begin{lemma}
For each $i\ge 0$, we have
%\begin{eqnarray}
$\lim\limits_{N_{1},N_{2}\rightarrow\infty}\hat{V}_i(x)=\tilde{V}_{i}(x)$,
$\lim\limits_{N_{1},N_{2}\rightarrow\infty}\hat{u}_{i+1}(x)=\tilde{u}_{i+1}(x)$,
$\forall x\in\Omega$.
%\end{eqnarray}
\end{lemma}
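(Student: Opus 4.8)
The plan is to show that the target pair $(\tilde V_i,\tilde u_{i+1})$ satisfies the online integral relation \emph{exactly}, so that the only discrepancy the least-squares fit must absorb is the truncation error of a basis expansion, which vanishes as $N_1,N_2\to\infty$. First I would differentiate $\tilde V_i$ along the trajectories of $\dot x=f(x)+g(x)\hat u_i(x)+g(x)\hat v_i$ as in \eqref{eq:new_form}: using the Lyapunov-type equation that defines $\tilde V_i$ and the identity $\nabla\tilde V_i(x)g(x)=-2r\tilde u_{i+1}(x)$ coming from the definition of $\tilde u_{i+1}$, one obtains $\dot{\tilde V}_i=-Q(x)-r\hat u_i^2(x)-2r\tilde u_{i+1}(x)\hat v_i$, exactly as in \eqref{eq:derivative}. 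Integrating over $[t_k,t_{k+1}]$ gives the zero-residual counterpart of \eqref{eq:onlineformular} satisfied by the true functions.

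Next I would cast the fitting step in vector form. Writing $p_i=[\hat c_{i,1},\dots,\hat c_{i,N_1},\hat w_{i,1},\dots,\hat w_{i,N_2}]^T$ and $y_{i,k}=-\int_{t_k}^{t_{k+1}}[Q(x)+r\hat u_i^2(x)]\,dt$, equation \eqref{eq:onlineformular} reads $\theta_{i,k}^T p_i=y_{i,k}+e_{i,k}$, and the least-squares solution is $p_i=(\sum_k\theta_{i,k}\theta_{i,k}^T)^{-1}\sum_k\theta_{i,k}y_{i,k}$, which is well defined because Assumption \ref{as:PE} makes the Gram matrix positive definite. Invoking completeness of $\{\phi_j\}$ on $\Omega$, I would expand $\tilde V_i=\sum_{j=1}^{N_1}c^*_{i,j}\phi_j+\varepsilon_{V,N_1}$ and $\tilde u_{i+1}=\sum_{j=1}^{N_2}w^*_{i,j}\phi_j+\varepsilon_{u,N_2}$ with remainders tending to zero uniformly on $\Omega$. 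Substituting into the exact relation shows that the ideal coefficient vector $p^*_i$ obeys $\theta_{i,k}^T p^*_i=y_{i,k}+\bar e_{i,k}$, where $\bar e_{i,k}$ is built from $\varepsilon_{V,N_1}$ and from $\int\varepsilon_{u,N_2}\hat v_i\,dt$; since $x(t)\in\Omega$ by Assumption \ref{as:inv} and $\hat v_i=u+\Delta-\hat u_i$ stays bounded along the trajectory, $\bar e_{i,k}\to0$ uniformly in $k$.

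The core estimate then follows by subtraction. With $\Delta p=p_i-p^*_i$ one has $\theta_{i,k}^T\Delta p=e_{i,k}-\bar e_{i,k}$, while optimality of the least-squares fit gives $\sum_k e_{i,k}^2\le\sum_k\bar e_{i,k}^2$ (the ideal weights are a feasible competitor). Combining with Assumption \ref{as:PE} I would write
\[
\delta|\Delta p|^2\le\tfrac1l\sum_k(\theta_{i,k}^T\Delta p)^2=\tfrac1l\sum_k(e_{i,k}-\bar e_{i,k})^2\le\tfrac4l\sum_k\bar e_{i,k}^2\longrightarrow0,
\]
so the fitted coefficients converge to the ideal ones.

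Finally I would turn coefficient convergence into the pointwise claim, writing $\hat V_i(x)-\tilde V_i(x)=\sum_j(\hat c_{i,j}-c^*_{i,j})\phi_j(x)-\varepsilon_{V,N_1}(x)$ and analogously for $\hat u_{i+1}$. The hard part will be the first sum: the bound $|\Delta p|\,(\sum_j\phi_j(x)^2)^{1/2}$ alone is insufficient, since the basis-magnitude factor may grow with $N_1$, so I expect to need the truncation error to vanish in a mode strong enough (uniform, not merely coefficient-wise) to force $\sum_j(\hat c_{i,j}-c^*_{i,j})\phi_j(x)\to0$ on $\Omega$. Securing this uniform approximability of $\tilde V_i$ and $\tilde u_{i+1}$ by the chosen linearly independent family, and tying the rate of $|\Delta p|\to0$ to the growth of the basis on $\Omega$, is the main technical obstacle; the remaining estimates are routine given Assumptions \ref{as:PE} and \ref{as:inv}.
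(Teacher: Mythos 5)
Your proposal follows essentially the same route as the paper's own proof: show that $(\tilde V_i,\tilde u_{i+1})$ satisfies the integral relation with zero residual, compare the least-squares residual against that of the ideal truncated weights, use the excitation assumption to bound the coefficient error $|\Delta p|$ by the truncation error, and then pass to uniform convergence on $\Omega$. The one step you flag as unresolved --- that coefficient convergence together with a basis whose size grows with $N_1,N_2$ does not automatically yield uniform convergence of $\sum_j(\hat c_{i,j}-c^*_{i,j})\phi_j(x)$ --- is precisely the step the paper's proof asserts without justification (it simply declares both the coefficient-error sum and the tail sum to be at most $\epsilon/2$), so your attempt matches the paper's argument in both structure and effective level of rigor.
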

\begin{proof} By definition
\begin{eqnarray}
\hspace{-1cm}&&\tilde{V}_{i}(x(t_{k+1}))-\tilde{V}_{i}(x(t_{k}))\nonumber\\
\hspace{-1cm}&=&-\int_{t_{k}}^{t_{k+1}}[Q(x)+r\hat{u}_{i}^{2}(x)+2r\tilde{u}_{i+1}(x)\hat{v}_i(x)]dt \label{eq:tildev}
\end{eqnarray}

Let $\tilde{c}_{i,j}$ and $\tilde{w}_{i,j}$ be the constant weights such that $\tilde{V}_{i}(x)=\sum_{j=1}^{\infty}\tilde{c}_{i,j}\phi_{j}(x)$ and $\tilde{u}_{i+1}(x)=\sum_{j=1}^{\infty}\tilde{w}_{i,j}\phi_{j}(x)$. Then, by \eqref{eq:onlineformular} and \eqref{eq:tildev}, we have
$e_{i,k}=\theta_{i,k}^{T}\bar{W}_i+\xi_{i,k}$,
where
\begin{eqnarray*}
\bar{W_{i}}\!&\!=\!&\!~~
%\left[\begin{array}{c}
%\tilde{c}_{i,1}-\hat{c}_{i,1}\\
%\tilde{c}_{i,2}-\hat{c}_{i,2}\\
%\vdots\\
%\tilde{c}_{i,N_{i}}-\hat{c}_{i,N_{_{i}}}\\
%\tilde{w}_{i,1}-\hat{w}_{i,1}\\
%\tilde{w}_{i,2}-\hat{w}_{i,2}\\
%\vdots\\
%\tilde{w}_{i,N_{i}}-\hat{w}_{i,N_{_{i}}}
%\end{array}\right]\\
\left[\begin{array}{cccccccc}
\!\tilde{c}_{i,1}\!&\!
\tilde{c}_{i,2}\!&\!
\cdots\!&\!
\tilde{c}_{i,N_{_{1}}}\!&\!
\tilde{w}_{i,1}\!&\!
\tilde{w}_{i,2}\!&\!
\cdots\!&\!
\tilde{w}_{i,N_{_{2}}}
\end{array}\right]^T\\
&\!&\!-
\left[\begin{array}{cccccccc}
\hat{c}_{i,1}\!&\!
\hat{c}_{i,2}\!&\!
\cdots\!&\!
\hat{c}_{i,N_{_{1}}}\!&\!
\hat{w}_{i,1}\!&\!
\hat{w}_{i,2}\!&\!
\cdots\!&\!
\hat{w}_{i,N_{_{2}}}
\end{array}\right]^T,\\
\xi_{i,k}\!\!&\!\!=\!&\!\sum_{j=N_{1}+1}^{\infty}\tilde{c}_{i,j}\left[\phi_{j}(x(t_{k+1}))-\phi_{j}(x(t_{k}))\right]\\
&&+\sum_{j=N_{2}+1}^{\infty}\tilde{w}_{i,j}\int_{t_{k}}^{t_{k+1}}2r\phi_{j}(x)\hat{v}_idt.
\end{eqnarray*}

Since the weights are found using the least-squares method, we have
\[\sum_{k=1}^l e_{i,k}^2\le\sum_{k=1}^l \xi_{i,k}^2\]

Also notice that,
\[
\sum\limits_{k=1}^l\bar{W}_{i}^{T}\theta_{i,k}^{T}\theta_{i,k}\bar{W_{i}}=\sum\limits_{k=1}^l(e_{i,k}-\xi_{i,k})^{2}
\]

Then, under Assumption \ref{as:PE}, it follows that
\[
\bar{|W_{i}|}^{2}\le\frac{4|\Xi_{i,l}|^{2}}{l\delta}=\frac{4}{\delta}\max_{1\le k\le l}\xi_{i,k}^{2}.
\]

Therefore, given any arbitrary $\epsilon>0$, we can find $N_{10}>0$ and $N_{20}>0$, such that when $N_1>N_{10}$ and $N_2>N_{20}$, we have
\begin{eqnarray}
&&|\hat{V}_i(x)-\tilde{V}_i(x)|\\
&\le& \sum_{j=1}^{N_1}|c_{i,j}-\hat c_{i,j}||\phi_j(x)|+\sum_{j=N_1+1}^{\infty}|c_{i,j}\phi_j(x)|\\
&\le& \frac{\epsilon}{2}+ \frac{\epsilon}{2}=\epsilon,~~\forall x\in\Omega.
\end{eqnarray}
Similarly, $|\hat{u}_{i+1}(x)-\tilde{u}_{i+1}(x)|\le \epsilon$. The proof is complete.
\end{proof}

We now prove Theorem 3.1 by induction:

\noindent 1) If $i=0$ we have $\tilde{V_{0}}(x)=V_{0}(x)$, and $\tilde{u}_{1}(x)=u_{1}(x)$. Hence, the convergence can directly be proved by Lemma A.1.

\noindent 2) Suppose for some $i>0$, we have
$\lim_{N_{1},N_{2}\rightarrow\infty}\hat{V}_{i-1}(x)={V}_{i-1}(x)$,
$\lim_{N_{1},N_{2}\rightarrow\infty}\hat{u}_{i}(x)={u}_{i}(x)$, $\forall x\in\Omega$. By definition, we have
\begin{eqnarray*}
&&|V_i(x(t))-\tilde{V}_i(x(t))|\\
&=&r|\int_{t}^{\infty}\left[\hat{u}_i(x)^2-u_i(x)^2\right]dt|\\
&+&2r|\int_{t}^{\infty}u_{i+1}(x)g(x)\left[\hat{u}_i(x)-u_i(x)\right]dt|\\
&+&2r|\int_{t}^{\infty}\left[\hat{u}_{i+1}(x)-u_{i+1}(x)\right]g(x)\hat{v}_idt|,~~\forall x\in\Omega.
%&&+2r|\int_{t}^{\infty}u^2_{i+1}g(x)(u_i-\hat{u}_i)dt|,
\end{eqnarray*}

By the induction assumptions, we known
\begin{eqnarray}
&&\hspace{-1.5cm}\lim\limits_{N_1,N_2\rightarrow\infty}\int_{t}^{\infty}\left[\hat{u}_i(x)^2-u_i(x)^2\right]dt=0\\
&&\hspace{-1.5cm}\lim\limits_{N_1,N_2\rightarrow\infty}\int_{t}^{\infty}u_{i+1}(x)g(x)\left[\hat{u}_i(x)-u_i(x)\right]dt=0
\end{eqnarray}
Also, by Assumption \ref{as:PE}, we conclude
\begin{eqnarray}
\lim\limits_{N_1,N_2\rightarrow\infty}|{u}_{i+1}(x)-\hat{u}_{i+1}(x)|=0
\end{eqnarray}
and
\begin{eqnarray}
\lim\limits_{N_1,N_2\rightarrow\infty}|{V}_i(x)-\tilde{V}_i(x)|=0.
\end{eqnarray}

Finally, since
\begin{eqnarray*}
|\hat{V}_{i}(x)-V_{i}(x)|
\le|V_{i}(x)-\tilde{V}_{i}(x)|+|\tilde{V}_{i}(x)-\hat{V}_{i}(x)|
\end{eqnarray*}
and by the induction assumption, we have
\begin{eqnarray}
\lim\limits_{N_1,N_2\rightarrow\infty}|{V}_i(x)-\hat{V}_i(x)|=0.
\end{eqnarray}
The proof is thus complete.

\section*{Proof of Theorem 3.2}
Define
\begin{eqnarray}
\bar{e}_{ro}(x)=\left\{\begin{array}{cc}e_{ro}(x),&V_{i^*}(x)\le d\\0,&V_{i^*}(x)> d\end{array}\right.
\end{eqnarray}
and
\begin{eqnarray}
u(x)=u_{i^*}(x)+\frac{r}{2}\rho^{2}(|x|^2){u}_{i^*+1}(x)+\bar{e}_{ro}(x) \label{eq:generalized_u}
\end{eqnarray}
Then, along the solutions of (\ref{eq:mc2}), by completing the squares, we have
\begin{eqnarray*}
&&\dot{V}_{i^*}(x) \\
%& = & \nabla V(x)(f(x)\!+\!gu_{o})\!+\!\nabla V\!(x)g(x)(u_{r}\!+\!\Delta\!+\!v)\\
%
 %& \le & -Q(x)-\frac{1}{4}\rho^2(|x|^2)\left[\nabla V_{i^*}(x)g(x)\right]^{2}\\
 %&&+\nabla V_{i^*}(x)g(x)(\Delta+\bar{e}_{ro}(x))\\
% & = & -Q(x)+\frac{1}{\rho^{2}(|x|^2)}(\Delta+v)^{2}\\
 %
 %
 %&&
% -\Left[\Frac{1}{2}{\Rho}(|X|^2)\Nabla V(X)G(X)-\Frac{1}{\Rho(|X|^2)}(\Delta+V)\Right]^{2}\\
 %
 %&&\hspace{6mm}-\left[\frac{1}{2r}\nabla Vg-r\Delta_{err}\right]^{2}+r|\Delta_{err}|^{2}\\
 %
 %
 & \le & -Q(x)+\frac{1}{\rho^{2}(|x|^2)}(\Delta+\bar{e}_{ro}(x))^{2}\\
 %
%  & = & -(Q(x)-\epsilon^{2}|x|^{2})\\
%  &&-\frac{1}{\rho^{2}(|x|^2)}\left[\epsilon^2\rho^{2}(|x|^2)|x|^2-(\Delta+v)^{2}\right]\\
% %
 & = & -(Q(x)-\epsilon^{2}|x|^{2})-\frac{4\gamma^2-(\Delta+\bar{e}_{ro}(x))^{2}}{\rho^{2}(|x|^2)}\\
 & \le & -Q_0(x)-4\frac{\gamma^{2}-\max\{\kappa^{2}_1(|w|),\kappa^{2}_2(|x|),\bar{e}_{ro}^2(|x|)\}}{\rho^{2}(|x|^2)}
\end{eqnarray*}
where $Q_0(x)=Q(x)-\epsilon^2|x|^2$ is a positive definite function of $x$.

Therefore, under Assumptions \ref{as:HJB}, \ref{as:duc} and the gain condition \eqref{eq:sg_cd}, we have the following implication:
\begin{eqnarray}
&&\!\!V_{i^*}(x)
\ge
%\max\{{\bar{\alpha}}\!\circ\!\gamma^{-1}\!\circ\! %\kappa_1\!\circ\!{\underline{\lambda}}^{-1}\!\left(W(w)\right),{\bar{\alpha}}\!\circ\!\gamma^{-1}\!\left(|v|\right)\}\nonumber\\
{\bar{\alpha}}\!\circ\!\gamma^{-1}\!\circ\! \kappa_1\!\circ\!{\underline{\lambda}}^{-1}\!\left(W(w)\right)\nonumber\\
\!\!&\!\!\Rightarrow\!\!&\!\!
|x|\ge\gamma^{-1}\circ \kappa_1\circ{\underline{\lambda}}^{-1}\left(W(w)\right)\nonumber\\
\!\!&\!\!\Rightarrow\!\!&\!\!
\gamma\left(|x|\right)\ge\kappa_1\left(|w|\right)\\
\!\!&\!\!\Rightarrow\!\!&\!\!
\gamma\left(|x|\right)\ge\max\{\kappa_1\left(|w|\right),\kappa_2\left(|x|\right),\bar{e}_{ro}\left(|x|\right)\}\nonumber\\
\!\!&\!\!\Rightarrow\!\!&\!\!
\dot V_{i^*}(x)\le - Q_0(x).\nonumber
\end{eqnarray}

Also, under Assumption \ref{as:duc}, we have
\begin{eqnarray}
&&W(w)\ge \kappa_3\circ{{\underline\alpha}^{-1}(V_{i^*}(x) )}\nonumber\\
&\Rightarrow& W(w)\ge \kappa_3 (|x|)\nonumber\\
&\Rightarrow& \nabla W(w)\Delta_w(w,x)\le-\kappa_4(|w|).
\end{eqnarray}

Finally, under the gain condition \eqref{eq:sg_cd}, it follows that
\begin{eqnarray}
&&{\gamma}(s)>\kappa_1\circ{\underline{\lambda}}^{-1}\circ\kappa_3\circ{\underline{\alpha}}^{-1}\circ{\bar{\alpha}}(s)             \nonumber\\
&\Rightarrow&
{\gamma}\circ{\bar{\alpha}}^{-1}(s')>\kappa_1\circ{\underline{\lambda}}^{-1}\circ\kappa_3\circ{\underline{\alpha}}^{-1}(s')          \\
&\Rightarrow&
s'>{\bar{\alpha}}\circ\gamma^{-1}\circ\kappa_1\circ{\underline{\lambda}}^{-1}\circ\kappa_3\circ{\underline{\alpha}}^{-1}(s')       \nonumber
\end{eqnarray}
where $s'=\bar\alpha(s)$. Hence, the following small-gain condition holds:
\begin{eqnarray}
\left[{\bar{\alpha}}\circ\gamma^{-1}\circ\kappa_1\circ{\underline{\lambda}}^{-1}\right]\circ\left[\kappa_3\circ{\underline{\alpha}}^{-1}(s)\right]<s,~~\forall s>0. \end{eqnarray}

By Theorem 3.1 in \cite{jiang1996a}, the system (\ref{eq:mc1}), (\ref{eq:mc2}), \eqref{eq:generalized_u} is globally asymptotically stable at the origin.

%Since the approximation only hold on some compact set $\Omega$, it is important to keep the solutions remaining in $\Omega$ at all times \cite{zhangyouping2000}.

Next, denote $\chi_1={\bar{\alpha}}\circ\gamma^{-1}\circ\kappa_1\circ{\underline{\lambda}}^{-1}$, and $\chi_2=\kappa_3\circ{\underline{\alpha}}^{-1}$.
Also, let $\hat{\chi}_1$ be a function of class $\mathcal{K}_{\infty}$ such that
\begin{enumerate}
\item $\hat{\chi}_1(s)\le\chi_1^{-1}(s)$, $\forall s\in[0,\lim\limits_{s\rightarrow\infty}\chi_1(s))$,
\item ${\chi}_2(s)\le\hat{\chi}_1(s)$, $\forall s\ge 0$.
\end{enumerate}

Then, as shown in \cite{jiang1996a}, there exists a continuously differentiable class $\mathcal{K}_{\infty}$ function $\sigma(s)$ satisfying $\sigma'(s)>0$ and $\chi_2(s)<\sigma(s)<\hat{\chi}_1(s)$, $\forall s>0$, such that
%\begin{proposition} Suppose the gain condition \eqref{eq:sg_cd} is satisfied. Then, for any given constant $d>0$,
the set
\begin{eqnarray}
\Omega_{i^*}=\{(w,x): \max\left[\sigma(V_{i^*}(x)),W(w)\right]\le d\}
\end{eqnarray}
is an estimate of the region of attraction of the closed-loop system composed of \eqref{eq:mc1}, \eqref{eq:mc2}, and \eqref{eq:acon}.
%\end{proposition}

The proof is thus complete.

\section*{Proof of Theorem 4.2}
Define
\begin{eqnarray*}
\bar{e}_{ro1}(X_1)&=&\left\{\begin{array}{cc}e_{ro1}(X_1),&U_{i^*}(X_1)\le d_1,\\0,&U_{i^*}(X_1)> d_1,\end{array}\right.\\
\bar{\bar{e}}_{ro}(x)&=&\left\{\begin{array}{cc}e_{ro}(x),&U_{i^*}(X_1)\le d_1,\\0,&U_{i^*}(X_1)> d_1,\end{array}\right.
\end{eqnarray*}
Along the solutions of \eqref{eq:muc1}-\eqref{eq:muc3} with the control policy
\begin{eqnarray*}
\!\!{u}
\!\!&\!\!=\!\!&\!\!-\bar{f}_1(x,z)
%+2r{\bf w}_{2,i^*+1}^T\Phi_2(x)\nonumber\\
+2r\hat{u}_{i^*+1}(x)-\frac{\bar{g}^2(x)\rho_1^2(|X_1|^2)\zeta}{4}\\
&&-\frac{\rho_{1}^2(|X_1|^2)\zeta}{4}-\frac{\epsilon^2\rho^2(\zeta^2)\zeta}{2\rho^2(|x|^2)}-\epsilon^2\zeta-\bar{e}_{ro1}(X_1), \nonumber
\end{eqnarray*}
it follows that
\begin{eqnarray*}
\dot U_{i^*}(X_1)
\!\!&\!\le\!&\! -Q_0(x)-\frac{1}{2}\epsilon^2\zeta^2\\
\!\!&\!\!&\!-\frac{\gamma^2_1(|X_1|)\!-\!\max\{\tilde\kappa_1^2(|w|),\tilde\kappa_2^2(|X_1|),\bar{\bar{e}}_{ro}^2(x)\}}{\frac{1}{4}\rho^2(|x|^2)}\\
\!\!&\!\!&\!-\frac{\gamma^2_1(|X_1|)\!-\!\max\{\tilde\kappa_1^2(|w|),\tilde\kappa_2^2(|X_1|),\bar{\bar{e}}_{ro}^2(x)\}}{\frac{1}{4}\rho^2_{1}(|X_1|^2)}\\
\!\!&\!\!&\!-\frac{\gamma^2_1(|X_1|)\!-\!\max\{\tilde\kappa_1^2(|w|),\tilde\kappa_2^2(|X_1|),\bar{e}_{ro1}^2(X_1)\}}{\frac{1}{4}\rho^2_{1}(|X_1|^2)}
\end{eqnarray*}
As a result,
\begin{eqnarray*}
&U_{i^*}(X_1)\le\max\{\bar{\alpha}_1\!\circ\!\gamma^{-1}_1\!\circ\!\tilde\kappa_1\!\circ\!\underline\lambda^{-1}(W(w)),
\bar{\alpha}_1\!\circ\!\gamma^{-1}_1(|v|)\}&\nonumber\\
&\Rightarrow
 \dot U_{i^*}(X_1)\le -Q_0(x)+\frac{1}{2}\epsilon^2|\zeta|^2&.
\end{eqnarray*}

The rest of the proof follows the same reasoning as in the proof of Theorem 3.2.
%More detailed steps of the proof can be found in our recent paper \cite{jiang2012tnnls}, which is available upon request.

%%%%%%%%%%%%%%%%%%%%%%%%%%%%%%%%%%%%%%%%%

% that's all folks
\end{document}